\documentclass[12pt]{amsart}

\usepackage[utf8]{inputenc}  
\usepackage[T1]{fontenc}

\usepackage{amsmath}
\usepackage{amsfonts}
\usepackage{amssymb}
\usepackage{amsthm}       
\usepackage{bbold}			
\usepackage{hyperref}      

\usepackage{geometry}
\geometry{margin=79pt,a4paper} 
\usepackage{tikz} 

\renewcommand {\phi} {\varphi}
\renewcommand{\epsilon}{\varepsilon}
\newcommand {\abs} [1] {\left\vert #1 \right\vert}

\newcommand {\ceil} [1] {\left\lceil #1 \right\rceil}
\newcommand {\floor} [1] {\left\lfloor #1 \right\rfloor}

\newcommand {\Prod} {\prod\limits}
\newcommand {\Sum} {\sum\limits}
\newcommand {\Lim} {\lim\limits}
\newcommand {\Limsup} {\limsup\limits}
\newcommand {\Liminf} {\liminf\limits}

\renewcommand{\P}{\mathbb{P}}
\newcommand{\E}{\mathbb{E}}
\newcommand{\N}{\mathbb{N}}
\newcommand{\Z}{\mathbb{Z}}
\newcommand{\Q}{\mathbb{Q}}
\newcommand{\R}{\mathbb{R}}

\newcommand{\1}{\mathbb{1}}

\renewcommand{\(}{\left(}
\renewcommand{\)}{\right)}

\renewcommand{\geq}{\geqslant}
\renewcommand{\leq}{\leqslant}

\newtheorem{de}{Definition} 
\newtheorem{theo}{Theorem}    
\newtheorem{prop}[theo]{Proposition}   
\newtheorem{corollary}[theo]{Corollary} 
\newtheorem{lemma}{Lemma}
\newtheorem{rmq}{Remark}

\numberwithin{equation}{section}     

\title[ ]{Sub-ballistic random walk in Dirichlet environment}
\keywords{Random walk in random environment, Dirichlet distribution, Reinforced random walks, Invariant measure viewed
from the particle} 
\subjclass[2000]{primary 60K37, 60K35}
\thanks{This work was supported by the ANR project MEMEMO}

\author[ ]{\'Elodie Bouchet}
\address{Universit\'e de Lyon, Universit\'e Lyon 1,
Institut Camille Jordan, CNRS UMR 5208, 43, Boulevard du 11 novembre 1918,
69622 Villeurbanne Cedex, France} \email{bouchet@math.univ-lyon1.fr}

\begin{document}

\begin{abstract}

We consider random walks in Dirichlet environment (RWDE) on $\Z ^d$, for $ d \geq 3 $, in the sub-ballistic case. We associate to any parameter $ (\alpha_1, \dots, \alpha _{2d}) $ of the Dirichlet law a time-change to accelerate the walk. We prove that the continuous-time accelerated walk has an absolutely continuous invariant probability measure for the environment viewed from the particle. This allows to characterize directional transience for the initial RWDE. It solves as a corollary the problem of Kalikow's $0-1$ law in the Dirichlet case in any dimension. Furthermore, we find the polynomial order of the magnitude of the original walk's displacement.

\end{abstract}

\maketitle

\section{Introduction}

The behaviour of random walks in random environment (RWRE) is fairly well understood in the case of dimension 1 (see Solomon (\cite{Solomon}), Kesten, Kozlov, Spitzer (\cite{KKS}) and Sinaï(\cite{Sinai})). In the multidimensional case, some results are available under ballisticity conditions (we refer to \cite{Zeitouni} and \cite{Sz} for an overview of progress in this direction), or in the case of small perturbations. But some simple questions remain unanswered. For example, there is no general characterization of recurrence, Kalikow's $0-1$ law is known only for $d \leq 2$ (\cite{ZM}).

Random walks in Dirichlet environment (RWDE) is the special case when the transition probabilities at each site are chosen as i.i.d. Dirichlet random variables. RWDE are interesting because of the analytical simplifications they offer, and because of their link with reinforced random walks. Indeed, the annealed law of a RWDE corresponds to the law of a linearly directed-edge reinforced random walk (\cite{ES}, \cite{Pemantle}). This model first appeared in \cite{Pemantle} in relation with edge reinforced random walks on trees. It was then studied on $ \Z \times G $ (\cite{KeaneRolles}), and on $ \Z ^d $ (\cite{ES2},\cite{Tournier},\cite{S2},\cite{S},\cite{ST}). 

We are interested in RWDE on $\Z ^d$ for $d \geq 3$. A condition on the weights ensures that the mean time spent in finite boxes is finite. Under this condition, it was proved (\cite{S}) that there exists an invariant probability measure for the environment viewed from the particle, absolutely continuous with respect to the law of the environment. Using \cite{ST}, this gives some criteria on ballisticity.

In this paper, we focus on the case when the condition on the weights is not satisfied. Then the mean time spent in finite boxes is infinite, and there is no absolutely continuous invariant probability measure (\cite{S}). The law of large numbers gives a zero speed. To overcome this difficulty, we construct a time-change that accelerates the walk, such that the accelerated walk spends a finite mean time in finite boxes. An absolutely continuous invariant probability measure then exists. With ergodic results, it gives a characterization of the directional recurrence in the sub-ballistic case. As a corollary, it solves the problem of Kalikow's $0-1$ law in the Dirichlet case (the case $d=2$ has been treated in \cite{ZM}).

Besides, in the directionally transient case, we show a law of large numbers with positive speed for our accelerated walk. This gives the polynomial order of the magnitude of the original walk's displacement, and could be a first step towards a limit theorem for the original RWDE.

\section{Definitions and statement of the results}

Let $(e_1,\dots,e_d)$ be the canonical base of $\Z^d$, $ d \geq 3 $, and set $e_j = - e_{j-d}$, for $j \in [\![ d+1,2d ]\!]$. The set $\{ e_1, \dots ,e_{2d} \}$ is the set of unit vectors of $\Z^d$. We denote by $\Vert z \Vert = \sum _{i=1} ^d |z_i|$ the $L_1$-norm of $z \in \Z^d$, and write $x \sim y$ if $ \Vert y - x \Vert = 1$. We consider the set of directed edges $E = \{ (x,y) \in (\Z^d)^2, x \sim y \}$. Let $\Omega$ be the set of all possible environments on $\Z ^d$ :
\[ \Omega =  \{ \omega = (\omega(x,y)) _{ x \sim y} \in ] 0 , 1] ^E \text{ such that } \forall x \in \Z ^d, \sum _{i=1}^{2d} \omega (x , x+ e_i)= 1  \} .\]
For each $ \omega  \in \Omega$, we run a Markov chain $Z_n$ on $ \Z ^d $ defined by the following transition probabilities : 
$ \forall (x,y) \in \Z^d$, $ \forall i \in [\![ 1,2d ]\!]$, 
\[P _x ^\omega (Z_{n+1} = y + e_i | Z_n = y ) = \omega (y, y + e_i) .\]

We are interested in random iid Dirichlet environments. Given a family of positive weights $ (\alpha _1, \dots , \alpha _{2d}) $, a random iid Dirichlet environment is $\omega  \in \Omega$ constructed by choosing independently at each site $x \in \Z^d$ the values of $ \( \omega ( x, x+e_i ) \) _{i \in [\![1,2d]\!]} $ according to a Dirichlet law with parameters $ (\alpha _1, \dots , \alpha _{2d}) $ that is with density : 
\[ \frac{ \Gamma \( \sum _{i=1} ^{2d} \alpha _i \) }{ \prod_{i=1} ^{2d} \Gamma \( \alpha _i \) } \( \prod _{i=1} ^{2d} x_i ^{\alpha _i - 1} \) dx_1 \dots dx_{2d-1} \]
on the simplex 
\[ \{ (x_1,\dots, x_{2d}) \in ]0,1]^{2d}, \sum _{i=1} ^{2d} x_i = 1 \} .\]
Here $\Gamma$ denotes the Gamma function $ \Gamma (\alpha) = \int _0 ^\infty t ^{\alpha - 1} e ^{-t} dt$ , and $dx_1 \dots dx_{2d-1}$ represents the image of the Lebesgue measure on $ \R^ {2d-1} $ by the application $( x_1, \dots ,x_{2d-1} )  \to ( x_1, \dots , x_{2d-1}, 1- x_1 - \dots - x_{2d-1} ) $. Obviously, the law does not depend on the specific role of $ x_{2d}$. We denote by $ \P ^{(\alpha)} $ the law obtained on $\Omega$ this way, by $ \E ^{(\alpha)} $ the expectation with respect to $\P ^{(\alpha)}$, and by $ \P_x ^{(\alpha)}[.] = \E ^{(\alpha)} [ P _x ^\omega(.) ] $ the annealed law of the process starting at $x$.

In \cite{S}, it was proved that when 
\[ \kappa =  2 \( \sum_{i=1} ^{2d} \alpha _i \) - \max _{i=1,\dots, d} (\alpha _i + \alpha _{i+d}) > 1 ,\]
there exists an invariant probability measure for the environment viewed from the particle, absolutely continuous with respect to $ \P ^{(\alpha)} $. This leads to a complete description of ballistic regimes and directional transience. However, when $ \kappa \leq 1$, such an invariant probability does not exist, and we only know that the walk is sub-ballistic. In this paper, we focus on the case $\kappa \leq 1$. We prove the existence of an invariant probability measure for an accelerated walk. This allows to characterize recurrence in each direction for the initial walk.

Let $ \sigma = (e ^1, \dots, e ^n) $ be a directed path. By directed path, we mean a sequence of directed edges $e^i$ such that $ \overline{e^i} = \underline{ e ^{i+1}} $ for all $i$ ($\underline{e}$ and $\overline{e}$ are the head and tail of the edge $e$). We note $ \omega _\sigma  = \prod _{i=1} ^n \omega (e^i) $. Let $ \Lambda $ be a finite connected set of vertices containing $0$. Our accelerating function is $\gamma ^\omega (x) = \frac{1}{\sum \omega _\sigma}$, where the sum is on all $\sigma$ finite simple (each vertex is visited at most once) paths starting from $x$, going out of $x + \Lambda$, and stopped just after exiting $x+\Lambda$. Let $X_t$ be the continuous-time Markov chain whose jump rate from $x$ to $y$ is $ \gamma ^\omega (x)  \omega (x,y) $, with $X_0 = 0$. Then $ Z_n = X _{t_n} $, for $ t_n = \sum _{k=1} ^n \frac{1}{\gamma ^\omega (Z_k) } E_k$, where the $E_i$ are independent exponentially distributed random variables with rate parameters $1$ : $X_t$ is an accelerated version of the walk $Z_n$.

We note $ ( \tau _x ) _{x \in \Z ^d} $ the shift on the environment defined by : $ \tau _x \omega (y,z) = \omega (x+y,x+z) $, and call process seen from the particle the process defined by $ \overline{\omega} _t = \tau _{X_t} \omega $. Under $ P _0 ^{\omega _0} $ ($ \omega _0 \in \Omega $), $\overline{\omega} _t$ is a Markov process on state space $ \Omega $, his generator $ R $ is given by
\[ R f (\omega) = \sum _{i=0} ^{2d} \gamma ^\omega (0) \omega (0, e_i) f(\tau _{e_i} \omega ) ,\]
for all bounded measurable functions $f$ on $\Omega$. Invariant probability measures absolutely continuous with respect to the law of the environment are a classical tool to study processes viewed from the particle. The following theorem provides one for our accelerated walk.

\begin{theo}\label{th1}

Let $d \geq 3$ and $ \P ^{(\alpha)} $ be the law of the Dirichlet environment for the weights $ (\alpha _1, \dots , \alpha_{2d}) $. Let $\kappa ^\Lambda > 0$ be defined by
\[ \kappa ^\Lambda = \min \{ \sum _{e \in \partial_+(K)}  \alpha _e \; , \; K \text{ connected set of vertices }, 0 \in K \text{ and } \partial \Lambda \cap K \neq \emptyset \} \]
where  $\partial_+(K) = \{ e \in E, \; \underline{e} \in K , \; \overline{e} \notin K \}$ and $ \partial \Lambda = \{ x \in \Lambda | \exists y \sim x \text{ such that } y \notin \Lambda \} $.
If $\kappa ^\Lambda > 1$, there exists a unique probability measure $ \Q ^{(\alpha)} $ on $ \Omega $ that is absolutely continuous with respect to $ \P ^{(\alpha)} $ and invariant for the generator $R$. Furthermore, $ \frac{ d \Q^{(\alpha)} }{d \P^{(\alpha)}} $ is in $ L_p (\P ^{(\alpha)}) $ for all $ 1 \leq p < \kappa ^\Lambda $.

\end{theo}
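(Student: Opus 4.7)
I would follow the periodization strategy introduced by Sabot in \cite{S}, constructing $\Q^{(\alpha)}$ as a thermodynamic limit of explicit invariant measures on finite tori, with uniform $L^p$ bounds on the densities made possible by the hypothesis $\kappa^\Lambda > 1$. For each $N \geq 1$, consider the periodized Dirichlet environment on $\mathbb{T}_N = (\Z/N\Z)^d$ and the accelerated chain $X^N_t$ on $\mathbb{T}_N$ with jump rates $\gamma^\omega(x)\omega(x,y)$. Since $\mathbb{T}_N$ is finite and the chain is irreducible $\P^{(\alpha)}_N$-a.s., there is a unique invariant probability $\mu^{\omega,N}$ on $\mathbb{T}_N$ for every $\omega$. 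A direct check shows that
\[ \Q_N(d\omega) = \frac{\mu^{\omega,N}(0)}{\E^{(\alpha)}_N[\mu^{\omega,N}(0)]}\, \P^{(\alpha)}_N(d\omega) \]
is the unique absolutely continuous invariant probability on $\Omega(\mathbb{T}_N)$ for the torus analogue of $R$. Writing $\mu^{\omega,N}(x) \propto \pi^{\omega,N}(x)/\gamma^\omega(x)$, where $\pi^{\omega,N}$ is the invariant measure of the discrete skeleton $Z_n$ on $\mathbb{T}_N$, the density of $\Q_N$ is proportional to $\pi^{\omega,N}(0)/\gamma^\omega(0)$.

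The heart of the proof is the uniform moment bound
\[ \sup_N \E^{(\alpha)}\!\left[ \left( \frac{\pi^{\omega,N}(0)}{\gamma^\omega(0)} \right)^p \right] < \infty \qquad \text{for every } 1 \leq p < \kappa^\Lambda. \]
I would expand $\pi^{\omega,N}(0)$ via the Markov-chain tree theorem as a sum over spanning trees of $\mathbb{T}_N$ oriented toward $0$, and compute the Dirichlet moment using the product form of the density together with the classical Gamma-function identities for Dirichlet integrals. The singular contributions to the integral are indexed by connected sets $K \ni 0$ whose total exit weight $\sum_{e \in \partial_+(K)} \alpha_e$ falls below $p$. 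The denominator $\gamma^\omega(0) = 1/\sum_\sigma \omega_\sigma$, being a sum over simple-path weights out of $0+\Lambda$, saturates exactly one unit of flow across every cut $K$ supported inside $0+\Lambda$, thereby neutralising all such trap-type singularities. The only cuts $K$ whose divergence survives must satisfy $\partial\Lambda \cap K \neq \emptyset$, and for these the definition of $\kappa^\Lambda$ yields $\sum_{e \in \partial_+(K)} \alpha_e \geq \kappa^\Lambda > p$, giving integrability uniformly in $N$.

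The remaining steps are fairly standard. Embedding $\Omega(\mathbb{T}_N)$ into $\Omega$ by cylinder identification, the uniform $L^p$ bound yields a subsequential weak-$L^p$ limit $\psi$ of the densities; the measure $\Q^{(\alpha)}(d\omega) = \psi(\omega)\,\P^{(\alpha)}(d\omega)$ inherits $R$-invariance by a continuity argument on bounded measurable cylinder functions, and $\psi \in L^p(\P^{(\alpha)})$ follows from the lower semicontinuity of the $L^p$ norm under weak convergence. Uniqueness follows from ergodicity: $\P^{(\alpha)}$ is ergodic under the spatial shifts $\tau_x$, hence so is every absolutely continuous $R$-invariant probability, and two ergodic measures absolutely continuous with respect to the same reference must coincide. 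The principal obstacle is thus the combinatorial moment estimate in the second paragraph: carefully tracking how $\gamma^\omega$ cancels the trap singularities supported inside $\Lambda$ and showing that what survives is controlled exactly by the min-cut quantity $\kappa^\Lambda$.
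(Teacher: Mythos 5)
Your high-level framework — periodize onto tori $\mathbb{T}_N$, construct the explicit invariant probability $\Q_N$ there via $\pi_N^\omega(0)/\gamma^\omega(0)$, prove a uniform $L^p$ bound on the densities, then pass to the limit using weak compactness, lower semicontinuity of the $L^p$ norm, and ergodicity of $\P^{(\alpha)}$ for uniqueness — is exactly the paper's. One small slip: the quantity whose $p$-th moment must be bounded uniformly in $N$ is not $\pi^{\omega,N}(0)/\gamma^\omega(0)$ but the fully normalized density $f_N(\omega) = N^d\,\tilde{\pi}_N^\omega(0)$, which carries the $\omega$-dependent denominator $\sum_{x\in T_N}\pi_N^\omega(x)/\gamma^\omega(x)$; your displayed supremum is the wrong quantity.

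The real divergence is in the moment estimate itself, and as sketched I don't think your route closes. You propose to expand $\pi^{\omega,N}(0)$ by the matrix-tree theorem and then compute Dirichlet moments directly. But $f_N^p$ is a \emph{ratio} of two large sums of monomials raised to a non-integer power $p$, and neither numerator nor (especially) denominator is amenable to Beta-integral identities in that form; the "singular contributions indexed by $K$" and the cancellation by $\gamma^\omega$ are stated as heuristics without an actual mechanism to extract or control them uniformly in $N$. The paper avoids the spanning-tree expansion entirely by two ingredients you did not name, and they are the linchpins of the argument. First, the arithmetico-geometric inequality converts the normalization into a product: $f_N^p \leq \prod_{x\in T_N}\left(\tilde\pi_N^\omega(0)/\tilde\pi_N^\omega(x)\right)^{p/N^d}$. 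Second, and crucially, the time-reversal invariance of the Dirichlet law (lemma 1 of \cite{S2}: $\check\omega(x,y) = \omega(y,x)\,\pi_N^\omega(y)/\pi_N^\omega(x)$ is again Dirichlet with reversed weights $\check\alpha$, since $\mathrm{div}(\alpha) = 0$) identifies $\prod_x(\pi_N^\omega(0)/\pi_N^\omega(x))^{p/N^d}$ with $\check\omega^{\check\theta_N}/\omega^{\theta_N}$ for any nonnegative $\theta_N$ satisfying $\mathrm{div}(\theta_N) = \frac{p}{N^d}\sum_x(\delta_0 - \delta_x)$. One then chooses $\theta_N$ by the max-flow min-cut lemma (lemma 2 of \cite{S}) with capacities $\frac{p}{\kappa^\Lambda}\alpha^{(\sigma)}$, where $\alpha^{(\sigma)}$ boosts the weight by $\kappa^\Lambda$ along an arbitrary simple path $\sigma$ from $0$ to $\Lambda^c$; the hypothesis $p < \kappa^\Lambda$ enters precisely to make the resulting Gamma arguments positive after applying Hölder to separate the $\gamma^\omega$ factors. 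Your cut-based intuition about $\gamma^\omega$ neutralizing singularities inside $\Lambda$ and $\kappa^\Lambda$ controlling what survives is correct in spirit — it is essentially why $\kappa^\Lambda$ appears — but without the time-reversal identity and the explicit flow construction there is no path from a spanning-tree expansion to the uniform bound, so I would classify this as a genuine gap in the proposed proof rather than merely a different route.
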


\begin{center} 
\begin{tikzpicture}[scale=1,>=stealth]
\draw [ultra thin, gray] (-2,-4) grid (6,3);
\tikzstyle{fleche}=[thick, dashed, ->] 
\draw[fleche] (0,0) -- (-1,0) ;
\draw[fleche] (0,0) -- (0,1) ;
\draw[fleche] (0,0) -- (0,-1) ;
\draw[fleche] (2,0) -- (3,0) ;
\draw[fleche] (1,0) -- (1,1) ;
\draw[fleche] (1,-1) -- (0,-1) ;
\draw[fleche] (1,-1) -- (1,-2) ;
\draw[fleche] (2,0) -- (2,1) ;
\draw[fleche] (2,-1) -- (3,-1) ;
\draw[fleche] (2,-2) -- (1,-2) ;
\draw[fleche] (2,-2) -- (2,-3) ;
\draw[fleche] (3,-2) -- (3,-1) ;
\draw[fleche] (3,-2) -- (3,-3) ;
\draw[fleche] (4,-2) -- (4,-1) ;
\draw[fleche] (4,-2) -- (4,-3) ;
\draw[fleche] (4,-2) -- (5,-2) ;
\draw[ -, very thick] (0,0) -- (1,0) -- (1,-1) -- (2,-1) -- (2,-2) -- (3, -2) -- (4,-2);
\draw[ -, very thick] (1,0) -- (2,0) -- (2,-1);
\draw [thick, dotted] (-1,2) -- (3,2) -- (3,1) -- (4,0) -- (4,-3) -- (-1,-3) -- (-1,2) ;
\draw (0,0) node[above right]{$0$} ;
\draw (3,2) node[above right]{$\Lambda$};
\end{tikzpicture}
\\Figure : $ \Sum _{e \in \partial_+(K)}  \alpha _e $ (dashed arrows) for an arbitrary $K$ (thick lines).
\end{center}

\begin{rmq}
If $\Lambda$ is a box of radius $R _\Lambda$, the formula is explicit :
\[ \kappa ^\Lambda =  \min _{i_0 \in [\![ 1,d ]\!]} \(  \alpha _{i_0} + \alpha _{i_0 + d} + (R_\Lambda + 1) \sum _{i \neq i_0} (\alpha _{i} + \alpha _{i + d})  \) .\]
\end{rmq}

\begin{rmq}
$ \kappa ^\Lambda $ can be made as big as we want by taking the set $\Lambda$ big enough. Then for each $ (\alpha _1, \dots , \alpha _{2d} ) $, there exists an acceleration function such that the accelerated walk verifies theorem \ref{th1}.
\end{rmq}

Let $ d_{\alpha} = \mathbb{E}_0 ^{(\alpha)} [Z_1] = \frac{1}{\sum _{i = 1} ^{2d} \alpha _i} \sum _{i=1} ^{2d} \alpha _i e_i $ be the drift after the first jump.

\begin{theo}\label{thppal}
Let $d \geq 3$,

\begin{itemize}
\item[i)] If $ \kappa ^\Lambda > 1 $ and $ d _\alpha = 0 $, then 
\[ \lim _{ t \to +\infty } \frac{X_t}{t} = 0 , \; \P _0 ^{(\alpha)} \text{a.s.} ,\]
and $ \forall i = 1 \dots d $, 
\[ \liminf _{ t \to +\infty } X_t \cdot e_i = - \infty  , \;  \limsup _{ t \to +\infty } X_t \cdot e_i = + \infty , \; \P _0 ^{(\alpha)} \text{a.s.}. \]

\item[ii)] If $ \kappa ^\Lambda > 1 $ and $ d _\alpha \neq 0 $, then $ \exists v \neq 0 $ such that 
\[ \lim _{ t \to +\infty } \frac{X_t}{t} = v , \; \P _0 ^{(\alpha)} \text{a.s.} ,\]
and $ \forall i = 1 \dots d $ such that $ d_\alpha \cdot e_i \neq 0 $, we have 
\[ ( d _\alpha \cdot e_i )( v \cdot e_i ) > 0  ,\] 
whereas if $ d_\alpha \cdot e_i = 0 $,  
\[ \liminf _{ t \to +\infty } X_t \cdot e_i = - \infty , \; \limsup _{ t \to +\infty } X_t \cdot e_i = + \infty , \; \P _0 ^{(\alpha)} a.s.. \]
\end{itemize}

\end{theo}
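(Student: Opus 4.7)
The plan is to derive Theorem \ref{thppal} from Theorem \ref{th1} via the ergodic theorem for the environment viewed from the particle, combined with symmetries of the Dirichlet law.

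\textbf{Law of large numbers for $X_t$.} The process $(\overline{\omega}_t)$ admits $\Q^{(\alpha)}$ as an invariant probability measure by Theorem \ref{th1}. I would first show that $(\overline{\omega}_t)$ is ergodic under $\Q^{(\alpha)}$: any non-trivial $R$-invariant event would yield a second invariant probability absolutely continuous with respect to $\P^{(\alpha)}$, contradicting the uniqueness in Theorem \ref{th1}. Decompose $X_t = M_t + \int_0^t b(\overline{\omega}_s)\, ds$ with local drift $b(\omega) = \gamma^\omega(0)\sum_{i=1}^{2d} \omega(0,e_i)\, e_i$; integrability $b \in L^1(\Q^{(\alpha)})$ follows from a H\"older combination of $d\Q^{(\alpha)}/d\P^{(\alpha)} \in L^p(\P^{(\alpha)})$ for some $p>1$ (which Theorem \ref{th1} provides since $\kappa^\Lambda>1$) with polynomial moments of $\gamma^\omega(0)$ under $\P^{(\alpha)}$. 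The ergodic theorem then gives $t^{-1}\int_0^t b(\overline{\omega}_s)\, ds \to v := \E_{\Q^{(\alpha)}}[b]$ almost surely, and $M_t=o(t)$ follows from an ergodic control of its predictable quadratic variation. Hence $X_t/t \to v$ almost surely under $\P_0^{(\alpha)}$, and the remaining task is to identify $v$ and handle the oscillations when $v\cdot e_i=0$.

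\textbf{Identification of $v$.} For each $i$ with $\alpha_i=\alpha_{i+d}$ (equivalently $d_\alpha\cdot e_i=0$), the Dirichlet law $\P^{(\alpha)}$ is invariant under the reflection $\sigma_i$ swapping $e_i$ and $e_{i+d}$. Choosing $\Lambda$ symmetric under $\sigma_i$ (possible, e.g., as a large box by Remark 2, while keeping $\kappa^\Lambda>1$), the acceleration $\gamma^\omega$ and the generator $R$ become $\sigma_i$-invariant too; by uniqueness of $\Q^{(\alpha)}$, its density inherits the symmetry, forcing $v\cdot e_i=0$. This settles part (i) and the zero-drift coordinates of part (ii). The main obstacle is the case $d_\alpha\cdot e_i\neq 0$: here I need the sign of $v\cdot e_i$ to match that of $d_\alpha\cdot e_i$. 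My approach is to write $v\cdot e_i = \E^{(\alpha)}[f(\omega)\, \gamma^\omega(0)(\omega(0,e_i)-\omega(0,e_{i+d}))]$ with $f=d\Q^{(\alpha)}/d\P^{(\alpha)}$ and apply an integration-by-parts identity on the Dirichlet density at the site $0$ to factor out $\alpha_i-\alpha_{i+d}$ times a quantity with definite sign; a stochastic monotonicity / coupling argument in the weights $(\alpha_j)$ is a secondary route.

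\textbf{Oscillations when $v\cdot e_i=0$.} The LLN above yields $X_t\cdot e_i/t\to 0$, but we still need $\limsup=+\infty$ and $\liminf=-\infty$. I argue by contradiction: the event $\{\limsup_t X_t\cdot e_i<+\infty\}$ is asymptotic in $X$, hence by ergodicity of $(\overline{\omega}_t)$ under $\Q^{(\alpha)}$ together with $\Q^{(\alpha)}\ll\P^{(\alpha)}$ it has $\P_0^{(\alpha)}$-probability $0$ or $1$. If $1$, then $M:=\sup_t X_t\cdot e_i$ is a.s.\ finite and attained infinitely often (since $X_t\cdot e_i\in\Z$); the ergodic theorem applied to $\omega\mapsto\1_{\omega(0,e_i)\geq\epsilon}$ ensures that a positive fraction of visits to $\{x\cdot e_i=M\}$ occur at sites where $\omega(y,y+e_i)\geq\epsilon$, and a conditional Borel--Cantelli argument then produces a jump to level $M+1$, contradicting the definition of $M$. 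Hence $\limsup X_t\cdot e_i=+\infty$, and symmetrically $\liminf X_t\cdot e_i=-\infty$, completing the proof.
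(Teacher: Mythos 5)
Your overall strategy (use Theorem \ref{th1}, ergodicity of the environment seen from the particle, a law of large numbers for $X_t$, then symmetry to kill the coordinates with $\alpha_i=\alpha_{i+d}$) is the same as the paper's, and the first two paragraphs are essentially fine modulo details (the paper works with the discrete increments $\Delta_i=X_i-X_{i-1}$ and then bridges to continuous time via a separate tail estimate on $\max_{[n,n+1]}|X_t-X_n|$, Lemma~\ref{excursions}, rather than via the martingale decomposition; but either route can be made to work). The two places where the proposal has genuine gaps are exactly the two hard points of the theorem.

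\textbf{Sign of $v\cdot e_i$ when $d_\alpha\cdot e_i\neq 0$.} You only sketch two possible routes (Dirichlet integration by parts at the origin, or coupling/monotonicity in the weights), and neither is carried out. This is the crux of the theorem and it is not a computation one can do locally: the density $f=d\Q^{(\alpha)}/d\P^{(\alpha)}$ is defined only through a torus limit and is not explicit, so integrating by parts against the Dirichlet density at site $0$ does not factor out a quantity of definite sign, and a stochastic monotonicity argument in $(\alpha_j)$ is not known here. What the paper actually uses is an external input: the directional transience result of Sabot--Tournier (\cite{ST}), which says that $d_\alpha\cdot e_i>0$ implies $\P_0^{(\alpha)}(X_{t}\cdot e_i\to+\infty)>0$. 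Combined with Kalikow's $0$--$1$ law (to upgrade positive probability of transience to a dichotomy) and a theorem of Durrett (6.3.2) for stationary ergodic increment sequences, this yields $\E^{\Q^{(\alpha)}}[E_0^\omega(X_1)]\cdot e_i>0$. Without that input your argument does not close; you should state clearly that you are invoking the directional transience of RWDE.

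\textbf{Oscillations when $v\cdot e_i=0$.} The paper simply applies the standard result (Durrett's Theorem 6.3.2; essentially Atkinson/Chung--Fuchs for stationary ergodic sequences) to the stationary ergodic sequence $(\Delta_k\cdot e_i)$ with zero mean: either $S_n\to\pm\infty$ or $\limsup=+\infty$ and $\liminf=-\infty$, and the $\pm\infty$ cases are excluded by the LLN. Your alternative argument is not rigorous as stated: after the $0$--$1$ reduction you consider the random level $M=\sup_t X_t\cdot e_i$, but the ergodic theorem you invoke computes \emph{time averages} of local functionals of $\overline\omega$, not spatial frequencies along the (random) hyperplane $\{x\cdot e_i=M\}$, and a ``conditional Borel--Cantelli'' at a trajectory-dependent level needs a careful construction that is absent. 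Replacing this sketch by the citation to the stationary-sequence oscillation theorem (which is exactly what the uniform ellipticity of the increments plus ergodicity gives you) both simplifies and makes the step correct.
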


As $ (X_t) _{t \in \R _+} $ and $ (Z_n) _{n \in \N} $ go through exactly the same vertices in the same order, and as the two processes stay a finite time on each vertex without exploding, recurrence and transience for the original walk $Z_n \cdot e_i$ follow from those of $X_t \cdot e_i$.

\begin{corollary}
\label{th1ralenti}
Let $d \geq 3$, for $ i = 1, \dots ,d $

\begin{itemize}
\item[i)] If $ d_\alpha \cdot e_i = 0 $, 
\[ \liminf _{ n \to +\infty } Z_n \cdot e_i = - \infty  , \;  \limsup _{ n \to +\infty } Z_n \cdot e_i = + \infty , \; \P _0 ^{(\alpha)} \text{a.s.}. \]
\item[ii)] If $ d_\alpha \cdot e_i > 0 $,
\[ \lim _{ n \to +\infty } Z_n \cdot e_i =  + \infty  , \; \P _0 ^{(\alpha)} \text{a.s.}. \]
\item[iii)] If $ d_\alpha \cdot e_i < 0 $,
\[ \lim _{ n \to +\infty } Z_n \cdot e_i =  - \infty  , \; \P _0 ^{(\alpha)} \text{a.s.}. \]
\end{itemize}

\end{corollary}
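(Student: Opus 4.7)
The plan is to reduce Corollary \ref{th1ralenti} to Theorem \ref{thppal} by exploiting the fact that, for any fixed acceleration set $\Lambda$, the trajectories of $(Z_n)$ and $(X_t)$ pass through exactly the same sites in the same order; only the sojourn times differ. First, by Remark 2 I fix a finite connected set $\Lambda \ni 0$ large enough so that $\kappa^\Lambda > 1$. The law of $(Z_n)$ does not depend on $\Lambda$, so no generality is lost by bringing $X_t$ into the picture. By construction there exist jump times $0 = t_0 < t_1 < \cdots$ with $X_t = Z_n$ for $t \in [t_n, t_{n+1})$, and Theorem \ref{thppal} applies to $X_t$.

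The only real technical point is non-explosion: I must check that $t_n \to +\infty$ almost surely, for otherwise a conclusion such as $\lim_t X_t \cdot e_i = +\infty$ would carry no information about $Z_n$. Since $\gamma^\omega(x) = 1/\sum_\sigma \omega_\sigma$ is a.s.\ positive and finite (a finite positive sum of products of strictly positive Dirichlet coordinates), the holding times $E_k / \gamma^\omega(Z_k)$ are a.s.\ positive. One then combines the integrability of $1/\gamma^\omega(0)$ under the invariant measure $\Q^{(\alpha)}$ from Theorem \ref{th1} with the ergodic theorem for the process seen from the particle to conclude that $t_n/n$ converges to a positive constant, so $t_n \to +\infty$ $\P_0^{(\alpha)}$-a.s. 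I expect this step to be the main obstacle, but it is essentially already handled while proving Theorem \ref{thppal}, since the very statement $X_t/t \to v$ presupposes non-explosion.

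With non-explosion in hand, set $n(t) = \max \{ n : t_n \leq t \}$, so that $X_t = Z_{n(t)}$ and $n(t) \to +\infty$ as $t \to +\infty$. For case (i), where $d_\alpha \cdot e_i = 0$, Theorem \ref{thppal} (its case (i) if $d_\alpha = 0$, otherwise the last clause of its case (ii)) yields $\liminf_t X_t \cdot e_i = -\infty$ and $\limsup_t X_t \cdot e_i = +\infty$ almost surely. Evaluating along the subsequence $t = t_n$ and using $Z_n = X_{t_n}$ with $n \to \infty$ gives the same conclusion for $Z_n \cdot e_i$. For cases (ii) and (iii), where $d_\alpha \cdot e_i \neq 0$, Theorem \ref{thppal}(ii) gives $X_t/t \to v$ with $v \cdot e_i$ of the same sign as $d_\alpha \cdot e_i$; hence $X_t \cdot e_i \to \pm\infty$ accordingly, and evaluating at $t = t_n$ yields $Z_n \cdot e_i \to \pm \infty$ with the corresponding sign.

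All other arithmetic is trivial, so the proof amounts to invoking Theorem \ref{thppal} for an appropriately chosen $\Lambda$, checking that $n(t) \to \infty$, and reading off the discrete-time conclusion from the continuous-time one via the identity $X_{t_n} = Z_n$.
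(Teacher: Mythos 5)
Your overall strategy is the one the paper uses: choose $\Lambda$ large enough that $\kappa^\Lambda > 1$ (Remark 2), observe that $(X_t)$ and $(Z_n)$ visit the same vertices in the same order, and transfer the directional conclusions of Theorem \ref{thppal} from $X_t$ to $Z_n$ via $Z_n = X_{t_n}$. You also correctly single out non-explosion ($t_n \to +\infty$) as the only substantive thing to verify. However, the mechanism you propose for non-explosion is wrong, and would in fact contradict the paper's main quantitative result.

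You claim that the integrability of $1/\gamma^\omega(0)$ under $\Q^{(\alpha)}$ together with the ergodic theorem for the environment viewed from the particle yields $t_n/n \to c > 0$. This cannot be right in the sub-ballistic regime $\kappa < 1$, which is the whole point of the paper. First, $\Q^{(\alpha)}$ is invariant for the \emph{continuous-time} process $\overline\omega_t$, not for the discrete-time jump chain $\tau_{Z_k}\omega$; the latter has no absolutely continuous invariant probability when $\kappa \leq 1$, which is exactly the obstruction that motivated the time-change. So there is no ergodic theorem available for the jump-indexed sums $\sum_{k\le n} E_k/\gamma^\omega(Z_k)$. Second, and more decisively, $t_n/n \to c > 0$ would force $Z_n$ and $X_{n}$ to travel on the same time scale, hence $Z_n$ would be ballistic whenever $d_\alpha \neq 0$; this is precisely what fails for $\kappa \leq 1$, and Theorem \ref{theoremelimite} (together with Theorem \ref{tpsatteinte}) shows instead that $t_n$ grows like $n^\kappa$, which is sublinear. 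So $t_n/n \to 0$, not a positive constant.

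The conclusion you need, $t_n \to +\infty$ a.s., is nevertheless true, and the paper obtains it from Lemma \ref{excursions}: the displacement of $X$ in each unit time interval has super-exponentially decaying tails, hence is a.s.\ finite, hence $X$ visits only finitely many vertices in each unit interval; since any finite set of vertices has a.s.\ finite $\gamma^\omega$ there and so cannot absorb infinitely many jumps in finite time, the chain does not explode. (The paper explicitly cites Lemma \ref{excursions} for the ``without exploding'' claim in the proof of Corollary \ref{Kalikow}, and the same reference applies here.) If you replace your $t_n/n$ claim with this argument, the rest of your proof goes through.
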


The proof of theorem \ref{thppal} allows besides to solve the problem of Kalikow's $0-1$ law in the Dirichlet case.

\begin{corollary}[Kalikow's $0-1$ law in the Dirichlet case]
\label{Kalikow}
Let $ \P ^{(\alpha)} $ be the law of the Dirichlet environment on $ \Z ^d $, $ d \geq 1 $, for the weights $ (\alpha _1, \dots , \alpha_{2d}) $, and $Z_n$ the associated random walk in Dirichlet environment. Then for all $ l \in \R ^d \setminus \{ 0 \} $, we are in one of the following cases : 
\begin{itemize}
\item 
$ \Liminf _{ n \to +\infty } Z_n \cdot l = - \infty , \; \Limsup _{ n \to +\infty } Z_n \cdot l = + \infty , \; \P _0 ^{(\alpha)} a.s. $,
\item 
$  \Lim _{ n \to +\infty } Z_n \cdot l = - \infty , \; \P _0 ^{(\alpha)} a.s. $,
\item
$ \Lim _{ n \to +\infty } Z_n \cdot l = + \infty , \; \P _0 ^{(\alpha)} a.s. $.
\end{itemize}
\end{corollary}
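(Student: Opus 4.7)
My plan is to exploit Theorem~\ref{th1} to transfer the question to the accelerated walk $(X_t)$, for which an ergodic absolutely continuous invariant measure $\Q^{(\alpha)}$ is available. The cases $d=1$ (Solomon) and $d=2$ (Zerner--Merkl, \cite{ZM}) are classical, so the new content is for $d\geq 3$. I would first choose $\Lambda$ large enough that $\kappa^\Lambda>1$, which is always possible by Remark 2. Since $(X_t)$ and $(Z_n)$ visit the same vertices in the same order and neither process explodes, establishing the trichotomy for $X_t\cdot l$ is equivalent to establishing it for $Z_n\cdot l$, and I would work exclusively with the accelerated walk.

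Next, I would invoke the LLN obtained in the course of the proof of Theorem~\ref{thppal} (via Birkhoff's theorem applied to the ergodic system $(\overline{\omega}_t,\Q^{(\alpha)})$) to get $X_t/t\to v$ almost surely, for a deterministic vector $v\in\R^d$. If $v\cdot l\neq 0$ this immediately yields $X_t\cdot l\to\operatorname{sgn}(v\cdot l)\cdot\infty$, landing in one of the two tendency cases. So I may assume $v\cdot l=0$, and the task becomes showing that $\liminf_{t\to\infty}X_t\cdot l=-\infty$ and $\limsup_{t\to\infty}X_t\cdot l=+\infty$ almost surely. The events $A^+=\{\limsup_{t\to\infty}X_t\cdot l=+\infty\}$ and $A^-=\{\liminf_{t\to\infty}X_t\cdot l=-\infty\}$ are invariant under the time-shift of $(\overline{\omega}_t)$, because a shift by $s$ translates $X_t\cdot l$ by the finite quantity $X_s\cdot l$, which does not affect a $\limsup$ or $\liminf$ at infinity. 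By ergodicity of $\Q^{(\alpha)}$ both events have $\Q^{(\alpha)}$-probability in $\{0,1\}$; absolute continuity transfers the dichotomy to $\P_0^{(\alpha)}$.

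The main obstacle is ruling out $\P_0^{(\alpha)}(A^+)=0$ (the case of $A^-$ is symmetric). Suppose for contradiction that $M:=\sup_{t\geq 0}X_t\cdot l<+\infty$ $\P_0^{(\alpha)}$-a.s., so that the walk is confined to the half-space $\{y\in\Z^d:y\cdot l\leq M\}$ while $v\cdot l=0$. I would then combine (i) Birkhoff's theorem for $(\overline{\omega}_t,\Q^{(\alpha)})$, which controls the frequency of visits to any shift-invariant set of environments, (ii) the equivalence between $\Q^{(\alpha)}$ and $\P^{(\alpha)}$ on the support of the density provided by Theorem~\ref{th1}, together with the $\Z^d$-translation invariance of $\P^{(\alpha)}$, and (iii) the strict positivity of Dirichlet edge weights, which gives a quenched uniform lower bound on the probability of taking any prescribed single step. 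These should together show that the walk almost surely visits vertices with $y\cdot l$ arbitrarily close to $M$ infinitely often, and then by a Borel--Cantelli argument crosses $M$ almost surely, contradicting the assumption. This last step is where the Dirichlet-specific input enters (through the positivity and integrability of the density in Theorem~\ref{th1}); the rest of the argument is soft ergodic theory.
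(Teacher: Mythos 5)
The first two paragraphs of your proposal are essentially sound and do match the paper's strategy (reduce to the accelerated walk $X_t$ with $\kappa^\Lambda>1$, apply Birkhoff to get $X_t/t\to v$, and handle $v\cdot l\neq 0$ immediately). The last paragraph, however, contains a genuine gap and an incorrect claim, and it is precisely there that the hard content of the $0$--$1$ law lives.

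First, your item (iii) is false: Dirichlet environments are \emph{not} uniformly elliptic. The Dirichlet density on the simplex is $\propto\prod_i x_i^{\alpha_i-1}$ and has full support, so $\omega(x,y)$ can be arbitrarily close to $0$; there is no deterministic lower bound on single-step probabilities, quenched or otherwise. Any Borel--Cantelli argument resting on ``a quenched uniform lower bound on the probability of taking any prescribed single step'' collapses in the Dirichlet setting.

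Second, the contradiction argument is incomplete even if one repairs the ellipticity issue. You suppose $\P_0^{(\alpha)}(A^+)=0$, set $M=\sup_t X_t\cdot l<\infty$, and assert the walk must visit vertices with $y\cdot l$ close to $M$ infinitely often. This simply need not happen: you must still rule out the possibility that $\limsup_t X_t\cdot l<\infty$ while $\liminf_t X_t\cdot l=-\infty$, i.e.\ that the walk is transient in direction $-l$ with zero asymptotic speed $v\cdot l=0$ (a priori conceivable for a sub-ballistic walk). In that case the walk leaves a neighbourhood of its past maximum for good and the infinitely-often claim fails. Ruling out transience of a mean-zero stationary ergodic cocycle is exactly what Atkinson's recurrence theorem (\cite{Atkinson}) accomplishes, and it is the non-soft ingredient the paper invokes: from $\P_0^{(\alpha)}(A_l)>0$ one gets $\P_0^{(\alpha)}(A_l\cup A_{-l})=1$ by the classical Kalikow/Zerner--Merkl statement, hence a finite interval visited only finitely often by $X_k\cdot l$, and Atkinson applied to the bi-infinite stationary extension $(X_k)_{k\in\Z}$ then forces $\E^{\Q^{(\alpha)}}[E_0^\omega(X_1\cdot l)]\neq 0$, so $v\cdot l\neq 0$ and the walk is ballistic (after acceleration) in direction $\pm l$. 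In short: you cannot avoid an Atkinson-type input by ellipticity plus Borel--Cantelli; you should replace the last paragraph by a citation of Atkinson's theorem applied to the ergodic increment sequence furnished by Lemma~\ref{stationaryergodic}.
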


\begin{rmq}

Theorem \ref{thppal} also gives the existence of a deterministic asymptotic direction $\P _0 ^{(\alpha)}$ a.s. when $ d \geq 3 $ and $ d _\alpha \neq 0 $. As I was finishing this article, Tournier informed me about the existence of a more general version of theorem 1 of \cite{ST}. Using  this result instead of \cite{ST} in the proof of theorem \ref{thppal} allows to show that the asymptotic direction is $ \frac{ d _\alpha }{ | d _\alpha | } $, see \cite{Tournier2} for details.

\end{rmq}

In the transient sub-ballistic case, we also obtain the polynomial order of the magnitude of the walk's displacement :

\begin{theo}
\label{theoremelimite}
Let $d \geq 3$, $ \P ^{(\alpha)} $ be the law of the Dirichlet environment with parameters $ (\alpha _1, \dots , \alpha_{2d}) $ on $ \Z ^d $, and $Z_n$ the associated random walk in Dirichlet environment. We suppose that $ \kappa = 2 \( \sum_{i=1} ^{2d} \alpha _i \) - \max _{i=1,\dots, d} (\alpha _i + \alpha _{i+d}) \leq 1 $. Let $ l \in \{ e_1,  \dots , e_{2d} \} $ be such that $ d _\alpha \cdot l \neq 0 $. Then 
\[  \lim _{n \to + \infty} \frac{ \log( Z _n \cdot l ) }{ \log (n) } = \kappa \text{ in } \P ^{(\alpha)} \text{-probability}.\]

\end{theo}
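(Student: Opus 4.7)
Choose the acceleration set $\Lambda$ large enough that $\kappa^\Lambda > 1$, which is possible by the second Remark following Theorem \ref{th1}. Theorems \ref{th1} and \ref{thppal}(ii) then apply, giving an invariant probability measure $\Q^{(\alpha)} \ll \P^{(\alpha)}$ for $(\overline{\omega}_t)$ and a velocity $v$ with $X_t/t \to v$ almost surely and $v \cdot l \neq 0$ (since $d_\alpha \cdot l \neq 0$ and $l$ is a unit vector). The identity $Z_n = X_{t_n}$ yields $Z_n \cdot l = (v \cdot l + o(1))\,t_n$ almost surely, so $\log |Z_n \cdot l| - \log t_n = O(1)$, and the statement reduces to
\[ \frac{\log t_n}{\log n} \longrightarrow \kappa \quad \text{in } \P^{(\alpha)}\text{-probability}, \]
or equivalently, setting $N_t = \#\{k \geq 1 : t_k \leq t\}$ for the jump counter, to $\log N_t / \log t \to 1/\kappa$ in probability.

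\textbf{Tail estimate on $\gamma^\omega$.} The central input is a polynomial tail estimate
\[ \P^{(\alpha)}\bigl[\gamma^\omega(0) > u\bigr] \asymp u^{-\kappa} \quad (u \to \infty), \]
with matching upper and lower bounds (up to possible logarithmic factors). Indeed $1/\gamma^\omega(0) = \sum_\sigma \omega_\sigma$ is small exactly when all simple exit paths from $\Lambda$ carry negligible Dirichlet weight. Decomposing over the blocking cuts $K \subset \Lambda$ that separate $0$ from $\partial \Lambda$ (in the spirit of the definition of $\kappa^\Lambda$) and exploiting the boundary behavior of the Dirichlet density on the simplex, the dominant blocking configuration reduces to a pair $\{0, e_{i_0}\}$ of adjacent vertices with $i_0 = \arg\max_i(\alpha_i + \alpha_{i+d})$, producing exactly the exponent $\kappa = 2\sum_i \alpha_i - \max_i(\alpha_i + \alpha_{i+d})$. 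Since $d\Q^{(\alpha)}/d\P^{(\alpha)} \in L_p(\P^{(\alpha)})$ for some $p > 1$ by Theorem \ref{th1}, the same polynomial order transfers to the invariant measure $\Q^{(\alpha)}$.

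\textbf{From the tail to $N_t$.} Given the tail, $\log N_t/\log t \to 1/\kappa$ follows from ergodic arguments for $(\overline{\omega}_t)$ under $\Q^{(\alpha)}$. Writing $N_t = \int_0^t \gamma^\omega(X_s)\,ds + (\text{martingale})$ and truncating, for every $M > 0$ the ergodic theorem gives
\[ \frac{1}{t}\int_0^t \bigl(\gamma^\omega(X_s) \wedge M\bigr)\,ds \longrightarrow \E_{\Q^{(\alpha)}}\bigl[\gamma^\omega \wedge M\bigr] \asymp M^{1-\kappa} \quad \text{a.s.} \]
Choosing $M = t^{1/\kappa - \varepsilon}$ gives the lower bound $N_t \geq t^{1/\kappa - \varepsilon}$ with high probability. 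The matching upper bound follows by controlling $\max_{s \leq t}\gamma^\omega(X_s)$ via a union bound over the $O(t^d)$ sites contained in the ball of radius $O(t)$ reachable by the ballistic accelerated walk, combined with the same tail estimate.

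\textbf{Main obstacle.} The technical crux is the sharp tail $\P^{(\alpha)}[\gamma^\omega(0) > u] \asymp u^{-\kappa}$. The upper bound requires decomposing over all blocking sets $K$ with $0 \in K \subset \Lambda$ and proving that the two-vertex trap dominates against many competing cuts; the lower bound requires exhibiting such traps explicitly, exploiting independence of the Dirichlet environment at different sites. A secondary difficulty is the Darling--Kac-type conversion of the heavy-tailed ergodic input into the logarithmic asymptotics of $N_t$ in probability, since a direct stable limit theorem for i.i.d.\ sums cannot be applied to the dependent increments generated by $(\overline{\omega}_t)$; this requires a careful combination of upper truncation and second-moment arguments under $\Q^{(\alpha)}$.
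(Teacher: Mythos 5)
Your proposal takes a genuinely different route from the paper, and while the reduction step is clean, both directions of the resulting estimate on the jump counter $N_t$ have real gaps that the paper's hitting-time approach is specifically designed to avoid.

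First, what is different and what is fine. You reduce the statement to $\log t_n / \log n \to \kappa$ (equivalently $\log N_t / \log t \to 1/\kappa$) by writing $Z_n \cdot l \sim (v\cdot l)\,t_n$; this works and bypasses the paper's detour through the hitting times $T_n^{l,Z}$, the $\max$ process $\overline{Z_n}$, and the renewal-time argument controlling $\overline{Z_n} - Z_n \cdot l$. The paper instead proves an intermediate theorem $\log T_n^{l,Z}/\log n \to 1/\kappa$ and then inverts; your reduction is arguably more direct. Also, the $\limsup$ part of your tail claim ($\P^{(\alpha)}[\gamma^\omega(0) > u] \lesssim u^{-\kappa}\,(\log u)^r$) is precisely what the paper proves in the appendix.

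Now the gaps. For the \emph{lower} bound $N_t \gtrsim t^{1/\kappa - \epsilon}$: the ergodic theorem for $\overline{\omega}_t$ under $\Q^{(\alpha)}$ gives $\frac{1}{t}\int_0^t (\gamma^\omega(X_s)\wedge M)\,ds \to \E_{\Q^{(\alpha)}}[\gamma^\omega \wedge M]$ a.s., \emph{for each fixed} $M$. Letting $M\to\infty$ afterward only yields $N_t/t\to\infty$, i.e.\ $\log N_t/\log t \geq 1$, which is far from $1/\kappa$. To obtain the correct exponent you must let $M = M(t)$ grow with $t$, and then the ergodic theorem no longer applies; some quantitative rate or mixing input is needed. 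Moreover, even for fixed $M$ you need the matching \emph{lower} tail bound $\Q^{(\alpha)}[\gamma^\omega > u] \gtrsim u^{-\kappa}$ to conclude $\E_{\Q^{(\alpha)}}[\gamma^\omega\wedge M] \gtrsim M^{1-\kappa}$; the paper never proves a lower tail bound (for $\P^{(\alpha)}$ or $\Q^{(\alpha)}$), and the $L_p$-density statement $d\Q^{(\alpha)}/d\P^{(\alpha)}\in L_p$ does not transfer tail orders by Hölder without degrading the exponent from $\kappa$ to $\kappa/p'$. The paper sidesteps all of this with Tournier's trap construction: the exit times $\Theta_k$ of the two-vertex blocks $\{Z_0, Z_0+e_1\}$ along disjoint slabs are stochastically dominated by genuinely i.i.d.\ copies $\hat\Theta_k$ whose tail $\P(\hat\Theta_0 \geq n) \sim c\,n^{-\kappa}$ is computed exactly via the Dirichlet density and Stirling, and the stable limit theorem for i.i.d.\ sums gives $\sum_{j<k}\Theta_j \gtrsim k^{1/\kappa-\epsilon}$. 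The independence of the $\hat\Theta_k$ is crucial and is exactly what your ergodic argument lacks.

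For the \emph{upper} bound $N_t \lesssim t^{1/\kappa + \epsilon}$: controlling $\max_{s\leq t}\gamma^\omega(X_s)$ by a union bound gives at best $\max \gamma \lesssim R^{1/\kappa}$ where $R$ is the number of visited sites. The crude bound $N_t \leq \text{Poisson}(t\cdot\max\gamma)$ then gives $N_t \lesssim t\,R^{1/\kappa}$, and since $R \leq N_t$, this rearranges to the trivially true $N_t^{1-1/\kappa} \lesssim t$ (recall $1/\kappa \geq 1$). In other words, the maximum of $\gamma^\omega$ is not the right quantity to control; what is needed is a bound on $\int_0^t \gamma^\omega(X_s)\,ds$, which the paper obtains by a completely different route: Markov's inequality on $\E[A(t)^\beta]$ for $\beta < \kappa$, subadditivity $A(t)^\beta \leq \sum_i (\int_{i-1}^i \gamma\,ds)^\beta$, H\"older to decouple the excursion size from $\gamma$, Lemma \ref{excursions} for the super-exponential decay of $D(l,i)$, and Lemma \ref{finiteexpectation} for the finite $s$-th moments of $\gamma^\omega(0)$, $s < \kappa$. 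The fractional-moment bound of Lemma \ref{finiteexpectation} suffices here; neither the matching lower tail nor a union bound over sites enters.

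In short, the two genuinely missing ingredients are (a) the i.i.d.\ trap decomposition with the stable limit theorem, which is what actually produces the exponent $1/\kappa$ from below, and (b) the Markov--moment--excursion argument for $A(t)$, which is what produces it from above without knowing the sharp lower tail of $\gamma^\omega$.
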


\begin{rmq}

The directional transience shown in \cite{Tournier2} should also enable to extend the results of theorem \ref{thppal}, corollary \ref{th1ralenti} and theorem \ref{theoremelimite} from $ (e_i) _{ i = 1, \dots, 2d } $ to any $ l \in \R ^d $.

\end{rmq}

\section{Proof of theorem \ref{th1}}

We first give some definitions and notations. Let $(G,V)$ be an oriented graph. For $e \in E$, we note $ \underline{e} $ the tail of the edge, and $ \overline{e} $ his head, such that $ e = ( \underline{e} , \overline{e} ) $. The divergence operator is : div $ : \R ^E \to \R ^V $ such that : $ \forall x \in \Z^d $, 
\[ \text{div} (\theta) (x)= \sum _{e \in E, \; \underline{e} = x} \theta (e) - \sum_{e \in E, \; \overline{e} = x} \theta (e) .\]

For $ N \in \N ^* $, we set $ T _N = (\Z / N \Z)^d $ the $d$-dimensional torus of size $N$. We note $G_N = (T_N, E_N)$ the directed graph obtained by projection of $(\Z^d, E)$ on the torus $T_N$. Let $ \Omega_N $ be the space of elliptic random environments on the torus : 
\[ \Omega_N =  \{ \omega = (\omega(x,y)) _{ x \sim y} \in ] 0 , 1] ^{E_N} \text{ such that } \forall x \in T_N, \sum _{i=1}^{2d} \omega (x , x+ e_i)= 1  \} .\]
We denote by $ \P _N ^{(\alpha)}$ the law on the environment obtained by choosing independently for each $ x \in T_N $ the exit probabilities of $x$ according to a Dirichlet law with parameters $ (\alpha_1, \dots, \alpha _{2d}) $.

For $\omega \in \Omega _N$, we note $ \pi _N ^\omega $ the unique (because of ellipticity) invariant probability measure of $ Z _n ^\omega $ on the torus in the environment $\omega$. Then $ \( \frac{ \pi _N ^\omega (x) }{ \gamma ^\omega (x) } \) _{ x \in T_N } $ is an invariant measure for $ X _t ^\omega $ on the torus in the environment $\omega$, and 
\[ \tilde{\pi} _N  ^\omega (y) :=  \frac{ \frac{  \pi _N ^\omega (y) }{ \gamma ^\omega (y)} }{ \sum _{x \in T_N}  \frac{  \pi _N ^\omega (x) }{ \gamma ^\omega (x)} } \] 
is the associated invariant probability. Define 
\[ f _N (\omega) :=  N^d  \tilde{\pi}_N ^\omega (0)   \text{ and }  \Q _N ^{(\alpha)} := f_N \P _N ^{(\alpha)} ,\]
then, thanks to translation invariance, $\Q _N ^{(\alpha)}$ is an invariant probability measure on $\Omega _N$. We can now reduce theorem \ref{th1} to the following lemma.

\begin{lemma}

$ \forall p \in [ 1, \kappa ^\Lambda [ $, 
\[  \sup _{N \in \N}  \parallel  f _N  \parallel _{ L_p ( \P _N ^{(\alpha)} ) }  < +\infty .\]
\end{lemma}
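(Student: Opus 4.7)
The plan is to derive a pointwise upper bound on $\tilde{\pi}_N^\omega(0)$ by a functional of $\omega$ whose $p$-th moment under $\P_N^{(\alpha)}$ is uniformly bounded in $N$, and then to show via the Dirichlet integration formula that such a bound is finite exactly when $p < \kappa^\Lambda$. The acceleration factor $\gamma^\omega$ is engineered so that its reciprocal $1/\gamma^\omega(0) = \sum_\sigma \omega_\sigma$ cancels the singular contributions coming from exits out of the neighbourhood $\Lambda$ of $0$; this replaces the cut condition at $\{0\}$ (giving the constant $\kappa$ as in \cite{S}) by a cut condition meeting $\partial\Lambda$ (giving $\kappa^\Lambda$).

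First, set $m_N^\omega(x) := \pi_N^\omega(x)/\gamma^\omega(x)$, so that $\tilde{\pi}_N^\omega(0) = m_N^\omega(0)/\sum_x m_N^\omega(x)$. Because the probability $\sum_\sigma \omega_\sigma$ of exiting $x+\Lambda$ along a simple path is at most $1$, one has $\gamma^\omega(x) \geq 1$ for every $x$; hence the denominator is bounded above by $1$, and translation invariance of $\P_N^{(\alpha)}$ on $T_N$ gives $\E_N^{(\alpha)}[f_N]=1$, handling $p=1$. For $p \in ]1,\kappa^\Lambda[$ I would seek a pointwise estimate $\tilde{\pi}_N^\omega(0) \leq C\,N^{-d}\,\Phi_N(\omega)$ with $\sup_N \|\Phi_N\|_{L^p(\P_N^{(\alpha)})} < \infty$. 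The key representation comes from Kac's formula $1/\pi_N^\omega(0) = \E_0^\omega[\tau_0^+]$: every excursion of $Z_n$ away from $0$ must first exit $\Lambda$, and the probability that this first exit follows the simple path $\sigma$ is $\omega_\sigma$, so that summing gives $\sum_\sigma \omega_\sigma = 1/\gamma^\omega(0)$. Multiplying $1/\pi_N^\omega(0)$ by $\gamma^\omega(0)$ then cancels the singular exit-probability factors $\omega_\sigma^{-1}$ responsible for the bad behaviour when $\kappa\leq 1$, and a flow/excursion decomposition of the remaining travel expresses what is left in terms of edges belonging to cuts separating $0$ from $\partial\Lambda$.

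Second, I would raise the resulting pointwise bound to the power $p$ and integrate. Each contribution factors as a product $\prod_e \omega_e^{\beta_e}$, whose Dirichlet expectation is an explicit product of $\Gamma$-ratios, finite iff $\alpha_e+\beta_e>0$ at every vertex. After summing the combinatorial data (exit paths, flows) appearing in the expansion and optimising via max-flow/min-cut, finiteness uniform in $N$ reduces to demanding that for every connected $K\ni 0$ with $K\cap\partial\Lambda\neq\emptyset$ the cut capacity $\sum_{e\in\partial_+(K)}\alpha_e$ exceeds $p$ --- which is precisely the condition $p < \kappa^\Lambda$. Uniformity in the torus size $N$ is ensured because the excursion step is local (it depends only on edges in a bounded neighbourhood of $\Lambda$), while the remainder of the walk on the torus contributes factors controlled through $\gamma^\omega\geq 1$ and the global normalisation $\sum_x \pi_N^\omega(x)=1$.

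The main obstacle is this flow/excursion step: producing a pointwise upper bound on $m_N^\omega(0)$ whose singular exit-from-$\Lambda$ factors are compensated precisely by $1/\gamma^\omega(0) = \sum_\sigma \omega_\sigma$, so that the Dirichlet expansion recovers the sharp min-cut constant $\kappa^\Lambda$ and not some weaker quantity. A secondary difficulty is controlling the random denominator $\sum_x m_N^\omega(x)$: rather than aiming for a deterministic lower bound (which can fail), I would restrict the sum to favourable vertices using a Markov-chain comparison that moves mass between neighbouring sites through Dirichlet transitions, or alternatively exploit the stationarity identity $\E_{\P_N^{(\alpha)}}[f_N\, g] = \E_{\Q_N^{(\alpha)}}[g]$ as a bootstrap mechanism from the trivial case $p=1$ to higher exponents $p < \kappa^\Lambda$.
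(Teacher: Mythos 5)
The two central devices of the paper's proof are absent from your sketch, and the two difficulties you name at the end are exactly the ones those devices are designed to dissolve.

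First, the awkward random denominator $\sum_x \pi_N^\omega(x)/\gamma^\omega(x)$ is not handled by restricting to favourable vertices or by a bootstrap; the paper eliminates it at the very start by the arithmetico--geometric inequality: writing $f_N^p = \bigl( \tilde{\pi}_N^\omega(0)\big/\!\frac{1}{N^d}\sum_x \tilde{\pi}_N^\omega(x) \bigr)^p$ and applying AM--GM gives
\[
 f_N(\omega)^p \;\leq\; \prod_{x\in T_N}\Bigl(\frac{\tilde{\pi}_N^\omega(0)}{\tilde{\pi}_N^\omega(x)}\Bigr)^{p/N^d}
 = \prod_{x\in T_N}\Bigl(\frac{\pi_N^\omega(0)}{\pi_N^\omega(x)}\Bigr)^{p/N^d}\Bigl(\frac{\gamma^\omega(x)}{\gamma^\omega(0)}\Bigr)^{p/N^d},
\]
which trades the sum in the denominator for a geometric average of ratios. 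This is the step your proposal has no substitute for, and the bootstrap identity $\E_{\P_N^{(\alpha)}}[f_N g]=\E_{\Q_N^{(\alpha)}}[g]$ does not close: with $g=f_N^{p-1}$ it just re-expresses the unknown quantity.

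Second, once ratios of $\pi_N^\omega$ appear, the paper does not go through Kac's formula or an excursion decomposition. Instead it uses the time-reversal invariance of Dirichlet environments (lemma 1 of \cite{S2}): for $\theta_N$ with $\mathrm{div}(\theta_N)=\frac{p}{N^d}\sum_x(\delta_0-\delta_x)$, the identity $\check{\omega}^{\check{\theta}_N}/\omega^{\theta_N}=\pi_N^{\mathrm{div}(\theta_N)}$ turns the product of $\pi$-ratios into a ratio of monomials in $\omega$ and the time-reversed $\check{\omega}$, whose Dirichlet moments are explicit $\Gamma$-products. The max-flow/min-cut input (lemma 2 of \cite{S}) then enters to construct such a $\theta_N$ with $\theta_N(e)\leq \frac{p}{\kappa^\Lambda}\alpha^{(\sigma)}(e)$, where $\alpha^{(\sigma)}$ boosts the capacity along an arbitrary simple path $\sigma$ from $0$ to $\Lambda^c$; this is where $\kappa^\Lambda$ (a cut condition at $\partial\Lambda$, not at $\{0\}$) is genuinely used. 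Your observation that $\gamma^\omega(0)=1/\sum_\sigma\omega_\sigma$ cancels singular exit factors is the right intuition, but in the paper the cancellation is realised inside the Dirichlet integral (the factors $(\sum_\sigma\omega_\sigma)^{pq-pq/N^d}$ in the numerator are split over a sum of paths $\sigma$ and absorbed into the modified exponents $\beta^\sigma(e)$), not via a pointwise excursion bound. As written, your flow/excursion step remains a plan rather than a proof, and without the AM--GM reduction and the time-reversal lemma it is not clear how it could be completed to reach the sharp threshold $\kappa^\Lambda$.
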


Once this lemma is proved, the proof of theorem \ref{th1} follows easily, we refer to \cite{S}, pages $5,6$, where the situation is exactly the same, or to \cite{Sz}, pages $11$ and $18,19$.

\begin{proof}[Proof of lemma $1$]
This proof is divided in two main steps. First we introduce the "time-reversed environment" and prepare the application of the "time reversal invariance" (lemma $1$ of \cite{S2}, or proposition $1$ of \cite{ST}). Then we apply this invariance, and use a lemma of the type "max-flow min-cut problem".

\subsection*{Step $1$ :}
Let $ (\omega (x,y)) _{x \sim y} $ be in $ \Omega _N $. The time-reversed environment is defined by : $ \forall (x,y) \in T_N ^2$, $x \sim y$, 
\[ \check{\omega} (x,y) = \omega (y,x) \frac{ \pi _N ^\omega (y) }{ \pi _N ^\omega (x) }  .\]
We know that : $\forall x \in T_N$,
\[ \sum _{ \underline{e} = x } \alpha (e) = \sum _{ \overline{e} = x } \alpha (e) = \sum _{j=1} ^{2d} \alpha _j,\]
then $ \text{div}(\alpha)(x) = 0 $. We can therefore apply lemma $1$ of \cite{S2} which gives : if $ ( \omega (x,y) ) $ is distributed according to $ \P _N ^{(\alpha) }$, then $ (\check{\omega} (x,y)) $ is distributed according to $ \P _N ^{(\check{\alpha})} $, where $ \forall (x, y) \in E_N ^2$,
\[ \check{\alpha} (x,y) = \alpha (y,x) .\]

Let $p$ be a real, $1 < p < \kappa ^\Lambda $. We have : 
\[ (f _N (\omega))^p = \( N^d  \tilde{\pi}_N ^\omega (0) \) ^p .\]
Introducing the immediate fact that
\[ 1 =  \Sum _{x \in T_N} \tilde{\pi} _N ^\omega (x)   ,\]
it gives :
 \begin{equation*}
   (f _N (\omega))^p 
	= \( \frac{ \tilde{\pi}_N ^\omega (0) \times N^d  }{    \Sum _{x \in T_N}  \tilde{\pi} _N ^\omega (x)  } \) ^p    
 \end{equation*}
we can then use the arithmetico-geometric inequality :
 \begin{align}
   (f _N (\omega))^p 
 	&\leq \prod _{ x \in T_N }  \( \frac{ \tilde{\pi}_N ^\omega (0) }{ \tilde{\pi}_N ^\omega (x) }  \) ^{ \frac{p}{N^d} } \nonumber &\\
    &= \prod _{ x \in T_N } \(  \( \frac{ \pi _N ^\omega (0) }{ \pi _N ^\omega (x) } \) ^{ \frac{p}{N^d} }  \(  \frac{ \gamma ^\omega (x) }{ \gamma ^\omega (0) }  \) ^{ \frac{p}{N^d} }   \)   \label{fnp} &
 \end{align}
 
Take $ \theta _N : E_N \to \R _+ $, and define $ \check{\theta} _N $ by : $ \forall x \sim y$, $ \check{\theta} _N (x,y) = \theta _N (y,x) $. It is clear that 
\begin{equation}
\label{check}
 \frac{ \check{\omega} ^{ \check{\theta} _N } }{ \omega ^{ \theta _N  } } = \pi _N ^{ \text{div} ( \theta _N ) }  
\end{equation}
where by $ \lambda ^\beta $ we mean $ \prod _{e \in E_N} \lambda (e) ^{\beta (e)} $ (resp. $ \prod _{x \in T_N} \lambda (x) ^{\beta (x)} $) for any couple of functions $ \lambda , \beta $ on $ E_N $ (resp. $ T_N $). Therefore, if we choose $ \theta _N : E_N \to \R _+$ such that 
\begin{equation}
\label{div}
 \text{div} (\theta _N) = \frac{p}{N^d} \sum _{x \in T_N} (\delta _0 - \delta _x) ,
\end{equation}
(\ref{fnp}) and (\ref{check}) give us
\[ f _N ^p \leq \frac{ \check{\omega} ^{ \check{\theta} _N } }{ \omega ^{\theta_N} } \prod _{ x \in T_N } \(  \frac{ \gamma ^\omega (x) }{ \gamma ^\omega (0) }  \) ^{ \frac{p}{N^d} } .\]
We therefore only have to show that we can find $ (\theta _N) _{N \in \N} $ such that for all $N$, $ \theta _N $ satisfies (\ref{div}) and such that :
\begin{equation}
\label{finite}
 \sup _{N \in \N} \E ^{(\alpha)} \left[  \frac{ \check{\omega} ^{ \check{\theta} _N } }{ \omega ^{\theta_N} } \prod _{ x \in T_N } \(  \frac{ \gamma ^\omega (x) }{ \gamma ^\omega (0) }  \) ^{ \frac{p}{N^d} } \right] < \infty .
\end{equation}
 
\subsection*{Step $2$ :}
 Take $p>1$. We first construct a sequence $ (\theta _N) _{N \in \N} $ that satisfies (\ref{div}), and then we show that it satisfies (\ref{finite}).

\textbf{Construction of $ (\theta _N) _{N \in \N} $.}
We want to use lemma 2 of \cite{S}, which is a result of type maw-flow min-cut (see for example \cite{LP}, section $3.1$, for a general description of the max-flow min-cut problem). We first recall some definitions and notions on the matter. In an infinite graph $G = (V,E)$, a cut-set between $ x \in V $ and $ \infty $ is a subset $S$ of $E$ such that any infinite simple directed path (i.e. an infinite directed path that does not go twice through the same vertex) starting from $x$ must necessarily go through one edge in $S$. A cut-set which is minimal for inclusion is necessarily of the form :
\begin{equation}
\label{cutset}
S = \partial _+ (A) = \{ e \in E , \; \underline{e} \in A, \; \overline{e} \in A^c \}
\end{equation}
where $A$ is a finite subset of $V$ containing $x$ and such that any $y \in A$ can be reached by a directed path in $A$ starting from $x$. Let $( c (e) )_{e \in E} $ be a family of non-negative reals, called the capacities. The minimal cut-set sum between $0$ and $\infty$ is defined by : 
\[ m( (c(e))_{e \in E}  ) = \inf \{ c(S), \; S \text{ a cut-set separating } 0 \text{ and } \infty \} \]
where $ c(S) = \sum _{e \in S} c(e) $. Remark that the infimum can be taken only on minimal cut-sets, i.e. cut-sets of the form (\ref{cutset}).

Subsequent calculations will show the need for a $\theta _N$ depending on an arbitrary path $\sigma$ from $0$ to $ \Lambda ^c $. Set $N \in \N$, we define : 
\begin{align*}
  \alpha ^{(\sigma)} (e)=
  \begin{cases}
    \alpha (e) + \kappa ^\Lambda  & \text{if }  e \in \sigma \\
    \alpha (e) & \text{otherwise }
  \end{cases}
\end{align*}
Then $ m(( \alpha ^{(\sigma)} (e) )_{ e \in E_N } ) \geq \kappa ^\Lambda $. Indeed : 
\begin{itemize}
\item If some $e \in \sigma$ is in the min-cut, it is obvious.
\item Otherwise, as $ 0 \in \sigma $ the min-cut is of the form $ S = \partial _+ (K) $ with $ \sigma \subset K $ and $K$ a finite connected set of vertices. The definition of $ \kappa ^ \Lambda $ in theorem \ref{th1} gives directly $ m(( \alpha ^{(\sigma)} (e) )_{ e \in E_N } ) \geq \kappa ^\Lambda $.
\end{itemize}
Then lemma 2 of \cite{S}, with $ c(e) = \frac{p}{\kappa ^\Lambda} \alpha ^{ (\sigma) } (e) $, gives that for all $ N \geq N_0 $ there is a function $ \theta _N $ satisfying (\ref{div}) and such that $ \theta _N (e) \leq  \frac{p}{\kappa ^\Lambda} \alpha ^{(\sigma)} (e) $. 

\textbf{Preliminary computations about (\ref{finite}).}
Let $q$ and $r$ be positive reals such that $ \frac{1}{r} + \frac{1}{q} = 1 $ and $pq < \kappa ^\Lambda $. Using in a first time Hölder's inequality and then the time-reversed environment (lemma $1$ of \cite{S2}), we obtain :
 \begin{align}
	 \E ^{ (\alpha) } \left[  \frac{ \check{\omega} ^{ \check{\theta} _N } }{ \omega ^{\theta _N} } \prod _{ x \in T_N } \(  \frac{ \gamma ^\omega (x) }{ \gamma ^\omega (0) }  \) ^{ \frac{p}{N^d} }  \right]    
 	&\leq   \E ^{ (\alpha) } \left[  \omega ^{- q\theta _N}  \prod _{ x \in T_N } \(  \frac{ \gamma ^\omega (x) }{ \gamma ^\omega (0) }  \) ^{ \frac{pq}{N^d} }  \right] ^{ \frac{1}{q} } \E ^{ (\alpha) } \left[  \check{\omega} ^{ r \check{\theta} _N } \right] ^{ \frac{1}{r} }  \nonumber &\\
     &=   \E ^{ (\alpha) } \left[  \omega ^{- q\theta_N}  \prod _{ x \in T_N } \(  \frac{ \gamma ^\omega (x) }{ \gamma ^\omega (0) }  \) ^{ \frac{pq}{N^d} }  \right] ^{ \frac{1}{q} } \E ^{ (\check{\alpha}) } \left[ \omega ^{ r \check{\theta} _N } \right] ^{ \frac{1}{r} } \label{acalculer}&
 \end{align}
Define $ \alpha (x) = \sum _{ \underline{e} = x } \alpha (e)$, $\theta _N (x) = \sum _{ \underline{e} = x } \theta _N (e)$. For all $x \in T_N$, we have $ \alpha (x) = \check{\alpha} (x) = \sum _{i=0} ^{2d} \alpha _i $, we thus note $ \alpha _0 = \sum _{i=0} ^{2d} \alpha _i $. In order to simplify notations, we note $ d\lambda _\Omega = \prod _{ e \in \tilde{E}_N } d\omega(e)$, where we obtain $ \tilde{E} _N $ from $E_N$  by removing for each $x$ one arbitrary edge leaving $x$. We can now compute the first expectation in (\ref{acalculer}) : 
 
 \begin{align*}
 	&\E ^{ (\alpha) } \left[  \omega ^{- q\theta_N}  \prod _{ x \in T_N } \(  \frac{ \gamma ^\omega (x) }{ \gamma ^\omega (0) }  \) ^{ \frac{pq}{N^d} }  \right] &\\
	&=  \int \( \prod _{e \in E_N} \omega (e) ^{ \alpha (e) - 1 - q \theta _N (e) } \) \Prod _{ x \in T_N } \(  \frac{ \gamma ^\omega (x) }{ \gamma ^\omega (0) }  \) ^{ \frac{pq}{N^d} } \frac{ \Prod _{ x \in T_N } \Gamma (\alpha _x) }{ \Prod _{ e \in E_N } \Gamma (\alpha _e) }  d\lambda _\Omega &\\
 	& = \int \( \prod _{e \in E_N} \omega (e) ^{ \alpha (e) - 1 - q \theta _N (e) } \) \frac{ \( \Sum _{ \sigma : 0 \to \Lambda^c } \omega _\sigma   \) ^{pq - \frac{pq}{N^d} }  \Prod _{ x \in T_N } \Gamma (\alpha _x)  }{ \Prod _{ \substack{x \in T_N \\ x \neq 0 }} \( \Sum _{ \sigma : x \to (x+\Lambda)^c } \omega _\sigma \) ^{ \frac{pq}{N^d} } \Prod _{ e \in E_N } \Gamma (\alpha _e) }  d\lambda _\Omega  &  
 \end{align*}
where all the sums on $\sigma$ correspond to the sums on simple paths. As 
\[ \( \sum _{ \sigma : 0 \to \Lambda^c } \omega _\sigma   \) ^{pq - \frac{pq}{N^d} } \leq C  \sum _{ \sigma : 0 \to \Lambda^c } \omega _\sigma ^{pq - \frac{pq}{N^d}} ,\]
with $ C = \( \# \{ \sigma : 0 \to \Lambda ^c \} \) ^{pq - \frac{pq}{N^d}} $, we have :

 \begin{align*}
 	&\E ^{ (\alpha) } \left[  \omega ^{- q\theta_N}  \prod _{ x \in T_N } \(  \frac{ \gamma ^\omega (x) }{ \gamma ^\omega (0) }  \) ^{ \frac{pq}{N^d} }  \right] &\\
	&\leq C \sum _{ \sigma : 0 \to \Lambda^c } \int \( \prod _{e \in E_N} \omega (e) ^{ \alpha (e) - 1 - q \theta _N (e) } \)  \frac{  \omega _\sigma   ^{pq - \frac{pq}{N^d} } \Prod _{ x \in T_N } \Gamma (\alpha _x) }{ \Prod _{ \substack{ x \in T_N\\ x \neq 0 }} \( \Sum _{ \sigma : x \to (x+\Lambda)^c } \omega _\sigma \) ^{ \frac{pq}{N^d} }  \Prod _{ e \in E_N } \Gamma (\alpha _e) }   d\lambda _\Omega  & \\
	&\leq C \sum _{ \sigma : 0 \to \Lambda^c } \int  \frac{  \( \Prod _{e \in E_N} \omega (e) ^{ \alpha (e) - 1 - q \theta _N (e) } \) \omega _\sigma   ^{pq - \frac{pq}{N^d} }  \( \Prod _{ x \in T_N } \Gamma (\alpha _x) \) }{ \( \Prod _{ \substack{x \in T_N  , \; x \neq 0 }}  \omega _{\sigma_x}  ^{ \frac{pq}{N^d} }  \)  \( \Prod _{ e \in E_N } \Gamma (\alpha _e) \) }  d\lambda _\Omega  & \\
 \end{align*}
where $\sigma_x$ is an arbitrarily chosen simple path in the preceding sum. Then
\[ \E ^{ (\alpha) } \left[  \omega ^{- q\theta_N}  \prod _{ x \in T_N } \(  \frac{ \gamma ^\omega (x) }{ \gamma ^\omega (0) }  \) ^{ \frac{pq}{N^d} }  \right]   \leq   C \sum _{ \sigma : 0 \to \Lambda^c } \frac{ \Prod _{ x \in T_N } \Gamma (\alpha _0) \Prod _{e \in E_N} \Gamma \( \beta ^\sigma (e) - q \theta _N (e)  \)  }{ \Prod _{ e \in E_N } \Gamma (\alpha _e) \Prod _{ x \in T_N } \Gamma \(  \beta ^{\sigma} (x) - q \theta _N (x)  \) }  \]
with 
\[ \beta ^\sigma (x) = \sum _{ x = \underline{e} } \beta ^\sigma (e) \]
and
\[ \beta ^\sigma (e) = \alpha (e) + pq \( 1 - \frac{1}{N^d}  \) \1 _{e \in \sigma} - \sum _{ x \in T _N, x \neq 0} \frac{pq}{N^d} \1 _{e \in \sigma _x } .\]
As $\Lambda$ is finite, an edge can be in only a finite number of $\sigma _x$. We have then for all $e$, $ \sum _{ x \in T _N, x \neq 0} \frac{pq}{N^d} \1 _{e \in \sigma _x } < + \infty $. This proves that $ \beta ^\sigma $ is well defined and takes only finite values.

The second expectation in (\ref{acalculer}) is easy to compute :
\[  \E^{( \check{\alpha) }} (\omega ^{ r \check{\theta} _N } ) =  \frac{ \Prod _{e \in E_N} \Gamma ( \alpha (e) + r \theta _N (e)) \Prod _{ x \in T_N } \Gamma ( \alpha _0 ) }{ \Prod _{ x \in T_N } \Gamma ( \alpha _0 + r \check{\theta} _N (x)  )  \Prod _{e \in E_N} \Gamma ( \alpha (e) ) } .\]
We did not check that the previous expressions are well defined : we need to prove that for the given $\theta _N$, the arguments of the Gamma functions are positive. As it is a bit tedious, we delay this checking to the next point in the proof.

We now have that $ \E ^{ (\alpha) } \left[  \frac{ \check{\omega} ^{ \check{\theta} _N } }{ \omega ^{\theta _N} } \prod _{ x \in T_N } \(  \frac{ \gamma ^\omega (x) }{ \gamma ^\omega (0) }  \) ^{ \frac{p}{N^d} }  \right]  $ is smaller than :
\[ \(  C \sum _{ \sigma : 0 \to \Lambda^c } \frac{ \Prod _{x} \Gamma (\alpha _0) \Prod _{e} \Gamma \( \beta ^\sigma (e) - q \theta _N (e)  \)  }{ \Prod _{e} \Gamma (\alpha _e) \Prod _{x} \Gamma \(  \beta ^{\sigma} (x) - q \theta _N (x) \) } \) ^{ \frac{1}{q} }   \(  \frac{ \Prod _{e} \Gamma ( \alpha (e) + r \theta _N (e)) \Prod _{x} \Gamma ( \alpha _0 ) }{ \Prod _{x} \Gamma ( \alpha _0 + r \check{\theta} _N (x)  )  \Prod _{e} \Gamma ( \alpha (e) ) } \) ^{ \frac{1}{r} }  \]
which is smaller than
\[  C'  \sum _{ \sigma : 0 \to \Lambda^c } \(  \frac{ \Prod _{x} \Gamma (\alpha _0) \Prod _{e} \Gamma \( \beta ^\sigma (e) - q \theta _N (e)  \)  }{ \Prod _{e} \Gamma (\alpha _e) \Prod _{x} \Gamma \(  \beta ^{\sigma} (x) - q \theta _N (x) \) } \) ^{ \frac{1}{q} }  \(  \frac{ \Prod _{e} \Gamma ( \alpha (e) + r \theta _N (e)) \Prod _{x} \Gamma ( \alpha _0 ) }{ \Prod _{x} \Gamma ( \alpha _0 + r \check{\theta} _N (x)  )  \Prod _{e} \Gamma ( \alpha (e) ) } \) ^{ \frac{1}{r} }  \]
where $ C' = C  \( \# \{ \sigma : 0 \to \Lambda ^c \} \) ^{ \frac{1}{q} } $. We want to prove the finiteness of this expression. As we sum on a finite number of paths, we only have to show that the general term of the sum stays finite. We are reduced to prove that $ \forall \sigma : 0 \to \Lambda ^c $ simple path,
\begin{equation}
\label{acalculer2}
 \sup _{ N  \in \N }  \(  \frac{ \Prod _{ x \in T_N }  \Gamma (\alpha _0)  }{ \Prod _{ e \in E_N } \Gamma (\alpha _e) }  \(  \frac{ \Prod _{e \in E_N} \Gamma \( \beta ^\sigma (e) - q \theta _N (e)  \)  }{  \Prod _{ x \in T_N } \Gamma \(  \beta ^{\sigma} (x) - q \theta _N (x)  \) } \) ^{ \frac{1}{q} }  \(  \frac{ \Prod _{e \in E_N} \Gamma ( \alpha (e) + r \theta _N (e))  }{ \Prod _{ x \in T_N } \Gamma ( \alpha _0 + r \check{\theta} _N (x)  )  } \) ^{ \frac{1}{r} } \) < + \infty
 \end{equation}
 
\textbf{Checking that the previous Gamma functions were well defined.}
As for all $ e \in E_N$ $ \alpha (e) > 0 $ and $ \theta _N (e) \geq 0 $, the result is straightforward except for $  \Gamma \( \beta ^\sigma (e) - q \theta _N (e)  \)$ and $ \Gamma \(  \beta ^{\sigma} (x) - q \theta _N (x)  \) $. By construction of $\theta _N$, we know that $ \beta ^{\sigma} (e) - q \theta _N (e)  \geq \beta ^{\sigma} (e) - \frac{pq}{\kappa ^\Lambda} \alpha ^{(\sigma)} (e) $. Then we just have to check the positivity of this second expression. Take $ e \in E_N $ :

\begin{itemize}

\item If $ e \in \sigma $, then
\begin{align*}
	  \beta ^{\sigma} (e) - \frac{pq}{\kappa ^\Lambda} \alpha ^{(\sigma)} (e) 
	 &= \alpha (e) - \frac{pq}{\kappa ^\Lambda} (\alpha (e) + \kappa ^\Lambda ) + pq - \frac{pq}{N^d} - \sum _{ \substack{ x \in T_N \\ x \neq 0 }} \frac{pq}{N^d} \1 _{ e \in \sigma _x }  &\\
	 &=  \alpha (e)\( 1 - \frac{pq}{\kappa ^\Lambda}  \) - \frac{pq}{N^d} \Big( 1 + \sum _{ \substack{x \in T_N \\ x \neq 0} } \1 _{ e \in \sigma _x } \Big) &
\end{align*}
As we assumed $ pq < \kappa ^\Lambda $ and $ \kappa ^\Lambda > 1 $, $ \alpha (e) \( 1 - \frac{pq}{\kappa ^\Lambda} \) > 0 $. The second term can be made as small as needed by choosing $N$ big enough. Then $ \beta ^{\sigma} (e) - \frac{pq}{\kappa ^\Lambda} \alpha ^{(\sigma)} (e) > 0 $ for $N$ big enough.

\item If $ e \notin \sigma $, then
\begin{align*}
	\beta ^{\sigma} (e) - \frac{pq}{\kappa ^\Lambda} \alpha ^{(\sigma)} (e) 
	&=  \alpha (e) - \frac{pq}{\kappa ^\Lambda} \alpha (e) - \sum _{ \substack{ x \in T_N \\ x \neq 0 }} \frac{pq}{N^d} \1 _{ e \in \sigma _x }  &\\
	&\geq \alpha (e)\( 1 - \frac{pq}{\kappa ^\Lambda}  \) - (\sharp \Lambda) \frac{pq}{N^d}	&
\end{align*}
As before, by choosing $N$ big enough we make sure that it is positive. Remark that for N big enough, $ \min _{i = 1 \dots 2d} \alpha_i \( 1 - \frac{pq}{\kappa ^\Lambda}  \) - (\sharp \Lambda) \frac{pq}{N^d} $ is also positive, and it is a uniform lower bound of $ \beta ^{\sigma} (e) - q \theta _N (e) $, for all $ e \notin \sigma $.
\end{itemize}

\textbf{Proof of (\ref{acalculer2}).}
As $\sigma$ is a finite path, the above tells us that there exists $ \epsilon > 0 $ such that :
\[ \forall e \in \sigma , \; \epsilon \leq \beta ^{\sigma} (e) - \frac{pq}{\kappa ^\Lambda} \alpha ^{(\sigma)} (e)  = \alpha (e)\( 1 - \frac{pq}{\kappa ^\Lambda}  \) - \frac{pq}{N^d} \Big( 1 + \sum _{ \substack{x \in T_N \\ x \neq 0} } \1 _{ e \in \sigma _x } \Big)  \leq \alpha (e) , \] 
and the same is true for $ \alpha (x) $ by summing on $e$. Define : 
\[ A_1 ^\sigma =  \( \frac{ \Prod _{x \in e \in \sigma } \Gamma (\alpha _0 ) }{ \Prod _{ e \in \sigma } \Gamma ( \alpha (e) ) } \frac{ \Prod _{ e \in \sigma } \sup _{ [ \epsilon , \max _i \alpha_i ] } \Gamma (s) }{ \Prod _{ x \in e \in \sigma } \inf _{ [ \epsilon , \max _i \alpha_i ]} \Gamma(s)  }  \) ^\frac{1}{q} \]
\[ A_2 ^\sigma =  \( \frac{ \Prod _{x \in e \in \sigma } \Gamma (\alpha _0 ) }{ \Prod _{ e \in \sigma } \Gamma ( \alpha (e) ) } \frac{ \Prod _{ e \in \sigma } \sup _{ [ \alpha (e) , \alpha (e) (1+r) + r \kappa ^\Lambda ] } \Gamma (s) }{ \Prod _{ x \in e \in \sigma } \inf _{ [ \alpha (e) , \alpha (e) (1+r) + r \kappa ^\Lambda ] } \Gamma(s)  }  \) ^\frac{1}{r} \]
We have then, for any fixed $\sigma$ : 
\begin{align*}
	& \frac{ \Prod _{ x \in T_N }  \Gamma (\alpha _0)  }{ \Prod _{ e \in E_N } \Gamma (\alpha _e) }  \(  \frac{ \Prod _{e \in E_N} \Gamma \( \beta ^\sigma (e) - q \theta _N (e)  \)  }{  \Prod _{ x \in T_N } \Gamma \(  \beta ^{\sigma} (x) - q \theta _N (x)  \) } \) ^{ \frac{1}{q} }  \(  \frac{ \Prod _{e \in E_N} \Gamma ( \alpha (e) + r \theta _N (e))  }{ \Prod _{ x \in T_N } \Gamma ( \alpha _0 + r \check{\theta} _N (x)  )  } \) ^{ \frac{1}{r} } &\\
	&\leq A_1 ^\sigma  A_2^\sigma  \frac{ \Prod _{ x \notin \sigma }  \Gamma (\alpha _0)  }{ \Prod _{ e \notin \sigma } \Gamma (\alpha _e) }  \(  \frac{  \Prod _{e \notin \sigma} \Gamma \( \beta ^{\sigma} (e) - q \theta _N (e)  \)  }{  \Prod _{ x \notin \sigma } \Gamma \(   \beta ^{\sigma} (x) - q \theta _N (x) \) } \) ^{ \frac{1}{q} }  \(  \frac{ \Prod _{e \notin \sigma} \Gamma ( \alpha (e) + r \theta _N (e))  }{ \Prod _{ x \notin \sigma } \Gamma ( \alpha _0 + r \check{\theta} _N (x)  )  } \) ^{ \frac{1}{r} }  &\\
	&\leq A_1 ^\sigma A_2 ^\sigma \exp \(  \Sum _{ \substack{ e \in E_N \\ e \notin \sigma } }  \nu \(  \alpha (e) , \theta _N (e), \beta ^{\sigma} (e)  \) - \Sum _{ \substack { x \in T_N \\ x \notin \sigma } }  \tilde{ \nu } \(  \alpha _0 , \theta _N (x), \beta ^{ \sigma} (x)  \) \)  &
\end{align*}
with : 
\[  \nu \(  \alpha (e) , \theta _N (e), \beta ^{\sigma} (e)  \)  = \frac{1}{r} \ln \Gamma ( \alpha (e) + r \theta _N (e) ) + \frac{1}{q} \ln \Gamma ( \beta ^{ \sigma } (e)  - q \theta _N (e) )  - \ln \Gamma ( \alpha (e)  )  \]
\[ \tilde{ \nu } \(  \alpha _0 , \theta _N (x), \beta ^{ \sigma} (x)  \) =  \frac{1}{r} \ln \Gamma ( \alpha _0 + r \theta _N (x) + \frac{pr}{N^d} ) + \frac{1}{q} \ln \Gamma ( \beta ^{ \sigma } (x) - q \theta _N (x)  )  - \ln \Gamma ( \alpha _0 ) \]
(the $ \frac{pr}{N^d} $ comes from the fact that $  \forall x \neq 0, \quad  \theta (x) - \check{\theta} (x)  =  \text{div} (\theta) (x) = - \frac{p}{N^d} $ ).
We set $ \underline{ \alpha } = \min _{i \in [\![ 1, 2d ]\!] } \alpha _i $ and $ \overline{ \alpha } = \max _{i \in [\![ 1, 2d ]\!] } \alpha _i $. Then $ \forall e \in E_N, \quad \underline{\alpha} \leq \alpha (e) \leq \overline{\alpha} $, $ \forall e \notin \sigma \quad q \theta _N (e) \leq \frac{pq}{\kappa ^\Lambda} \alpha (e) $ and $ pq < \kappa ^\Lambda $. Taylor's inequality gives : $ \forall e \notin \sigma, \forall x \notin \sigma $, 
\begin{align*}
  \begin{cases}
    \abs{ \nu \(  \alpha (e) , \theta _N (e), \beta ^{\sigma} (e)  \)  }   \leq  C_1 \(  \theta _N (e) ^2 + \frac{pq}{N^d} \) &  \\
    \abs{ \tilde{ \nu } \(  \alpha _0 , \theta _N (x), \beta ^{ \sigma} (x)  \) }   \leq  C_2 \(  \theta _N (x) ^2 + \frac{p}{N^d} + \frac{pq}{N^d} \) &
  \end{cases}
\end{align*}
with $C_1$ and $C_2$ positive constants. Then we can find a constant $ C_3 > 0 $ independent of $ N \geq N_0 $ such that :
\begin{align*}
	& \frac{ \Prod _{ x \in T_N }  \Gamma (\alpha _0)  }{ \Prod _{ e \in E_N } \Gamma (\alpha _e) }  \(  \frac{ \Prod _{e \in E_N} \Gamma \( \beta ^\sigma (e) - q \theta _N (e)  \)  }{  \Prod _{ x \in T_N } \Gamma \(  \beta ^{\sigma} (x) - q \theta _N (x)  \) } \) ^{ \frac{1}{q} }  \(  \frac{ \Prod _{e \in E_N} \Gamma ( \alpha (e) + r \theta _N (e))  }{ \Prod _{ x \in T_N } \Gamma ( \alpha _0 + r \check{\theta} _N (x)  )  } \) ^{ \frac{1}{r} } &\\
	&\leq \exp \( C_3 \(  \sum _{e \in E_N} \theta _N (e) ^2 + \sum _{x \in T_N} \theta _N (x) ^2  \)  \). &
\end{align*}
According to lemma $2$ of \cite{S}, this is bounded by a finite constant independent of $N$. It follows that the supremum on $N$ is finite too. This concludes the argument for any fixed $ \sigma $ and proves (\ref{acalculer2}). This proves the lemma.

\end{proof}

\section{Proof of theorem \ref{thppal} and corollary \ref{Kalikow}}

To obtain results on the initial random walk $Z_n$, we need some estimates on our acceleration function $ \gamma ^\omega $. In particular, we will need the following lemma :
\begin{lemma}
\label{finiteexpectation}
For all $x \in \Z ^d$ and $s < \kappa$, 
\[ \E ^{(\alpha)} \(  ( \gamma ^\omega (x) ) ^s  \)  < + \infty .\]
\end{lemma}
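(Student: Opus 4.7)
By translation invariance of $\P^{(\alpha)}$ we may take $x=0$. Writing $\gamma^\omega(0)=1/p(\omega)$ with $p(\omega):=\sum_\sigma\omega_\sigma$, the goal becomes $\E^{(\alpha)}[p^{-s}]<+\infty$ for every $s<\kappa$. I would establish a tail bound $\P^{(\alpha)}(p<\epsilon)\leq C\epsilon^\kappa$ for $\epsilon\in(0,1)$ and conclude via the layer-cake formula $\E[p^{-s}]=s\int_0^\infty t^{s-1}\P(p<1/t)\,dt$, which converges precisely when $s<\kappa$.

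The key input is a lower bound on $p$ that isolates the Dirichlet variables at $0$ and its nearest neighbours, which is where the $\kappa$-singularity originates. Decomposing each simple exit path according to its first two edges $(0,e_i),(e_i,e_i+e_j)$ with $e_j\neq -e_i$, and collecting the continuation contributions, one obtains
\[
p \;\geq\; R(\omega)\cdot\sum_{i=1}^{2d}\omega(0,e_i)\,\bigl(1-\omega(e_i,0)\bigr),
\]
where $R(\omega)>0$ depends only on the environment at vertices outside the ``central cluster'' $\mathcal{C}:=\{0\}\cup\{e_i:1\leq i\leq 2d\}$ and encodes a uniform lower bound on the probability of completing a simple exit from a distance-$2$ vertex. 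When $\Lambda$ is the unit ball this inequality is an equality with $R\equiv 1$. By independence of $R$ from the cluster variables,
\[
\E^{(\alpha)}[p^{-s}] \;\leq\; \E[R^{-s}]\cdot \E\!\left[\Bigl(\sum_{i=1}^{2d} W_iU_i\Bigr)^{-s}\right],
\]
where $W_i=\omega(0,e_i)$, $U_i=1-\omega(e_i,0)$, and with $j(i)$ the index such that $e_{j(i)}=-e_i$: one has $(W_i)\sim\mathrm{Dir}(\alpha_1,\dots,\alpha_{2d})$ and the $U_i\sim\mathrm{Beta}(\sum_k\alpha_k-\alpha_{j(i)},\alpha_{j(i)})$, all mutually independent.

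The local factor is the heart of the argument. Conditioning on $U$ and using the Dirichlet density,
\[
\E_W\!\bigl[(\textstyle\sum_i U_iW_i)^{-s}\bigr] \;=\; \frac{\Gamma(\sum_k\alpha_k)}{\prod_k\Gamma(\alpha_k)}\int_{\mathrm{simplex}}\bigl(\textstyle\sum_i U_iw_i\bigr)^{-s}\prod_i w_i^{\alpha_i-1}\,dw.
\]
A corner-asymptotics analysis at $w_{i^*}=1$ of the simplex shows this behaves as $\sim U_{i^*}^{-(s-\sum_{k\neq i^*}\alpha_k)_+}$ as $U_{i^*}\to 0$ (other $U_i$'s kept positive). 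Integrating against the Beta density of $U_{i^*}$ near zero, $\sim u^{\sum_k\alpha_k-\alpha_{j(i^*)}-1}$, yields a finite contribution iff $s<2\sum_k\alpha_k-\alpha_{i^*}-\alpha_{j(i^*)}$ for each $i^*$; minimising over $i^*$ gives exactly $s<\kappa$.

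The main obstacle I anticipate is controlling $\E[R^{-s}]$ up to the critical order $\kappa$: a naive single-continuation-path bound on $R$ only furnishes finite inverse moments up to $\min_e\alpha_e$, which is generally much smaller than $\kappa$. To reach order $\kappa$ I would write $R$ as a sum over many continuation sub-paths and apply a max-flow/min-cut argument in the spirit of Lemma~$2$ of \cite{S} (as used in the proof of Lemma~$1$ above), applied to the graph with $\mathcal{C}$ deleted. Once this is in place, combining both factor bounds gives $\P^{(\alpha)}(p<\epsilon)\leq C\epsilon^\kappa$ and closes the proof.
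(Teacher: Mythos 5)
Your core intuition is right and matches the paper's: the critical exponent $\kappa$ arises from the two-edge ``trap'' at $0$, i.e.\ from the factor $\sum_i \omega(0,e_i)\bigl(1-\omega(e_i,0)\bigr)$, and your corner analysis of the local Dirichlet/Beta integral correctly recovers $\kappa = 2\sum_k\alpha_k - \max_i(\alpha_i+\alpha_{i+d})$. But the factorization you build around this intuition has a genuine flaw, and it is not just the technicality you flag.

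The issue is with the continuation factor $R$. To make $R$ independent of the central cluster $\mathcal{C}=\{0\}\cup\{e_i\}$ you must restrict the continuation paths to avoid $\mathcal{C}$, and to factor it out of the sum you must take something like $R=\min_y R_y$ over distance-$2$ vertices $y$, where $R_y$ is the probability of a simple exit from $y$ to $\Lambda^c$ staying outside $\mathcal{C}$. This makes $R_y$ sensitive to traps at $y$ \emph{in the restricted graph}: a strongly connected pair $A=\{(y,y+e_k),(y+e_k,y)\}$ has, for $R_y$, an effective exit sum $\sum_{e\in\partial_+\underline{A},\ \overline{e}\notin\mathcal{C}}\alpha(e)$, which is \emph{strictly less} than the full exit sum whenever some exit edge of $A$ points into $\mathcal{C}$. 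For instance with $y=2e_1$ and $k=1$ one loses $\alpha_{d+1}$, giving an exponent $\kappa-\alpha_{d+1}<\kappa$. So $\E[R^{-s}]$ can diverge for some $s<\kappa$, and the bound $\E[p^{-s}]\leq\E[R^{-s}]\,\E[L^{-s}]$ does not close: the lower bound $p\geq R\cdot L$ has thrown away precisely the alternative escape routes that prevent $p$ itself from being small. Note also that the tool you propose for $R$, Lemma $2$ of \cite{S}, is a max-flow construction on the torus used to build the flows $\theta_N$ in the proof of Lemma~$1$; it is not a statement about inverse moments of exit probabilities and does not apply here.

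The paper's actual proof (Theorem \ref{finiteexpectationfinitegraph} in the appendix, after \cite{Tournier}) avoids any fixed factorization. It proceeds by induction on the number of edges of a finite graph with a cemetery, each step identifying the random neighbour $x$ maximizing $\omega(0,x)$, contracting the reciprocal pair $C(\omega)=\{(0,x),(x,0)\}$ to a new vertex, using the aggregation property of Dirichlet laws to stay within the Dirichlet class after quotienting, and finally checking that the min-cut exponent $\beta$ does not decrease under the contraction. This recursive collapse is what lets the exponent remain exactly $\min_A\beta_A$ all the way down, rather than degrading at every step as a product decomposition would. Your local-factor computation is a nice sanity check that the trap at $0$ is the dominant singularity, but to turn it into a complete proof you would essentially have to reinvent the quotienting induction for the continuation term.
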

As its proof is quite computational, we defer it to the appendix. Remark that it is nevertheless quite easy to get a weaker bound : $ \gamma ^\omega (0) = \frac{1}{\sum \omega _\sigma} \leq \frac{1}{\omega _{ \sigma_1 }} $, where the sum is on all $\sigma$ finite simple paths from $0$ to $\Lambda ^c$, and where $ \sigma_1 $ is the path from $0$ to $\Lambda ^c$ going only through edges $(n e_1, (n+1) e_1 )$. Then $ \E ^{ (\alpha) } \(   \gamma ^\omega (0) ^\lambda \) \leq \E ^{ (\alpha) } \( \frac{1}{\omega _{ \sigma_1 }} ^\lambda \) < + \infty $ for all $ \lambda < \alpha_1 $.

Theorem \ref{thppal} is based on classical results on ergodic stationary sequences, see \cite{D} pages $342-344$. We need another preliminary lemma.

\begin{lemma}
\label{stationaryergodic}
$ \Q ^{(\alpha)} $ is ergodic and equivalent to $ \P ^{(\alpha)} $. Set $ \Delta _i = X _{i} - X _{ {i-1} } $, $ i \in \N $, then $ \Delta _i $ is stationary and ergodic under $ \Q ^{(\alpha)} [ P_0 ^\omega (.) ] $.
\end{lemma}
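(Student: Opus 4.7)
The plan is to establish the three assertions of the lemma in sequence, all of which follow from combining the invariance property of $\Q^{(\alpha)}$ given by Theorem~\ref{th1}, the ellipticity of the Dirichlet environment (the laws $\omega(0,e_i)$ and the factor $\gamma^\omega(0)$ are strictly positive and finite $\P^{(\alpha)}$-almost surely), and the fact that $\P^{(\alpha)}$, being a product measure on $\Omega$, is ergodic under the $\Z^d$-action of the spatial shifts $(\tau_x)_{x \in \Z^d}$.

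To establish equivalence, Theorem~\ref{th1} already gives $\Q^{(\alpha)} \ll \P^{(\alpha)}$ with density $f$; I would show that $A = \{f > 0\}$ has full $\P^{(\alpha)}$-measure. The set $A$ has $\Q^{(\alpha)}$-full measure by construction, and invariance of $\Q^{(\alpha)}$ under the Markov semigroup of $R$ forces the process $\overline{\omega}_t$ to stay in $A$ for $\Q^{(\alpha)}$-a.e.\ starting point. Because the first jump of $X_t$ reaches every neighbor $e_i$ with strictly positive rate $\gamma^\omega(0)\omega(0,e_i) > 0$ for $\P^{(\alpha)}$-a.e.\ $\omega$, this staying-property forces $\tau_{e_i}\omega \in A$ for $\P^{(\alpha)}$-a.e.\ $\omega \in A$ and every $i$. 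Iterating over directions and over $x \in \Z^d$, $A$ coincides $\P^{(\alpha)}$-a.s.\ with a translation-invariant set; ergodicity of $\P^{(\alpha)}$ under spatial shifts, combined with $\int f\, d\P^{(\alpha)} = 1$, then forces $\P^{(\alpha)}(A) = 1$.

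For ergodicity of $\Q^{(\alpha)}$, the same two-step pattern applies. If $B \subset \Omega$ is an invariant event for the semigroup of $R$, then by the same ellipticity argument $B$ is translation-invariant modulo $\Q^{(\alpha)}$-null sets, hence modulo $\P^{(\alpha)}$-null sets by the equivalence just proved, and ergodicity of $\P^{(\alpha)}$ under spatial shifts gives $\P^{(\alpha)}(B) \in \{0,1\}$; equivalence yields $\Q^{(\alpha)}(B) \in \{0,1\}$. For the last assertion I would work on the joint space of environments and continuous-time trajectories and introduce the unit-time shift $T(\omega,(X_t)_{t\geq 0}) = (\tau_{X_1}\omega,(X_{1+t}-X_1)_{t \geq 0})$. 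Invariance of $\Q^{(\alpha)}$ under the time-$1$ Markov kernel of the process seen from the particle, together with translation invariance and the Markov property of $X_t$, implies that $T$ preserves the annealed measure $\Q^{(\alpha)}(d\omega)\, P_0^\omega(dX_\cdot)$. Since $\Delta_i = X_i - X_{i-1} = F \circ T^{i-1}$ with $F(\omega, X_\cdot) = X_1$, stationarity of $(\Delta_i)_{i \geq 1}$ is immediate, and ergodicity of $T$ follows by the standard correspondence between ergodicity of a Markov chain's invariant measure and ergodicity of the shift on its path space, reducing to ergodicity of $\Q^{(\alpha)}$ from the previous step.

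The main obstacle is the careful bookkeeping in the ellipticity-to-shift-invariance step: one has to check that the $\P^{(\alpha)}$-null sets on which positivity of $\gamma^\omega(0)\omega(0,e_i)$ or inclusion in $A$ fails can be discarded uniformly across the iteration over directions and over $x \in \Z^d$. Once these positivity statements are known to hold simultaneously outside a single $\P^{(\alpha)}$-null set (which is immediate for the Dirichlet law), the argument is essentially routine and parallels the one carried out in \cite{S} under the stronger condition $\kappa > 1$.
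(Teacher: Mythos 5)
Your proposal is correct and follows the same underlying approach as the paper. For the first claim both run the Kozlov/Sznitman argument (ellipticity combined with ergodicity of the i.i.d.\ environment measure under spatial shifts), and for the second claim the ``standard correspondence'' you invoke---between ergodicity of the environment chain's invariant measure $\Q^{(\alpha)}$ and ergodicity of the shift on the trajectory/increment space---is precisely what the paper proves inline: the function $r(X_n,\omega)=P^\omega_{X_n}\bigl((\Delta_i)\in A\bigr)$ is a closed bounded martingale converging to $\1_A((\Delta_i))$, and Birkhoff applied to $\overline{\omega}_n$ under $\Q^{(\alpha)}$ forces its mean into $\{0,1\}$; the only real difference is presentational, in that the paper defers the first part to Sznitman's lecture notes and spells out the martingale step, whereas you spell out the first part and cite the martingale step as a known fact.
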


\begin{proof}
The proof of the first point is easily adapted from chapter 2 of \cite{Sz}, by replacing the discrete process by the continuous process : we use the continuous martingales convergence theorems, and the continuous version of Birkhoff's theorem (see for example \cite{K}, pages $9 - 11$). 

For the second point, as $ \Q ^{(\alpha)} $ is an invariant probability for $ \overline{\omega}_t $, it is straightforward that $ \Delta _i $ is stationary. It remains to prove ergodicity. Set $ A \subset \( \Z ^d \) ^\N $ a measurable set such that $ \forall t $, $ \theta _t ^{-1} (A) = A$ with $ \theta _t $ the time-shift. We note 
\[ r(x , \omega) = P _x ^\omega ( ( \Delta _i \in A ) )  \text{ and }  r(\omega) = r(0 , \omega) .\] 
We have $ \forall \omega \in \Omega $, 
\begin{equation}
\label{limiteind}
 \lim _{n \to \infty} r ( X _{n} , \omega ) = \1 _A ((\Delta _i)) , \;  P_x ^\omega  \text{ a.s.} .
\end{equation}
Indeed, setting $ \mathcal{F}_n = \sigma ( (X_t)_{t \leq n} ) $ gives :
\[ P _x ^\omega \( (\Delta _i) \in A | \mathcal{F}_n \) = P _x ^\omega \( (\Delta _{i+n}) \in A | \mathcal{F}_n \) = P _{X_{n}} ^\omega \( (\Delta _i) \in A \) = r ( X _{n} , \omega ) ,\]
then $ r ( X _{n} , \omega ) $ is a (closed) bounded martingale and we have the wanted limit (\ref{limiteind}) by a.s. convergence, as $ \1 _A ((\Delta _i)) $ is $ \mathcal{F} _\infty $-measurable. Remark that $ r(X_n,\omega) = r( \overline{\omega} _n ) $. The application of Birkhoff's ergodic theorem (\cite{D}, page 337) for the time-shift of size $1$ gives 
\[ \lim _{n \to \infty} \frac{1}{n} \sum _{k=1} ^n r (X _{k} , \omega) = \E ^{\Q ^{(\alpha)}} (r(\omega)) \; , \;  P _0 ^\omega  \text{a.s.}.\]
Comparing with (\ref{limiteind}), it implies that $ \E ^{\Q ^{(\alpha)}} (r(\omega)) \in \{ 0 , 1 \} $.

\end{proof}

\begin{lemma}
\label{excursions}

Let $ D (l,n) = \max _{t \in [0,1]} | (X_{n+t} - X_n) \cdot l | $ be the maximum distance travelled by the walk in direction $l = e_1, \dots, e_{2d}$, during a time $[n,n+1]$, $n \in \N$. Choose $R_\Lambda$ such that $ \Lambda $ is included in the ball $ B ( 0, R _\Lambda ) $. Then
\[ \P ^{(\alpha)} \( D (l,n) \geq  2 k R_\Lambda  \) \leq \frac{ C ^k }{ k ! } \]
where $C$ is a positive constant depending only on the parameters $(\alpha_1, \dots, \alpha_{2d})$.

\end{lemma}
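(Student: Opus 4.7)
The plan is to decompose the trajectory of $X_t$ on $[n, n+1]$ into excursions separated by $\Lambda$-box-exit events, and then to show that only a Poisson-like number of such excursions can fit into a unit time interval.

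First, I would set $\tau_0 = n$ and, inductively, $\tau_{j+1} = \inf\{t > \tau_j : X_t \notin X_{\tau_j} + \Lambda\}$, with exit positions $y_j = X_{\tau_j}$. For $t \in [\tau_j, \tau_{j+1})$ one has $X_t \in y_j + \Lambda \subseteq B(y_j, R_\Lambda)$, and the exit jump at $\tau_{j+1}$ moves the walker by one unit step past the boundary, so $\|y_{j+1} - y_j\| \leq R_\Lambda + 1 \leq 2R_\Lambda$ (assuming $R_\Lambda \geq 1$, the other case being trivial). Writing $N$ for the number of exit times lying in $(n, n+1]$, the triangle inequality telescopes to give $D(l,n) \leq (2N+1) R_\Lambda$, hence $\{D(l,n) \geq 2kR_\Lambda\} \subseteq \{N \geq k\}$, and the problem reduces to bounding $\P^{(\alpha)}(N \geq k)$.

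Next, I would observe that each inter-exit duration $\tau_{j+1} - \tau_j$ is at least the first holding time $F_j$ of $X$ at $y_j$, which given $\omega$ is exponential with rate $\gamma^\omega(y_j)$. Hence $\{N \geq k\} \subseteq \{\sum_{j=0}^{k-1} F_j \leq 1\}$. Writing $F_j = E_j/\gamma^\omega(y_j)$ with $E_j$ iid standard exponentials, the pointwise inequality $\sum F_j \geq \sum E_j / \max_j \gamma^\omega(y_j)$ combined with the Gamma tail bound yields, conditionally on $\omega$ and on the sequence $(y_j)$,
\[ \P\!\left(\sum_{j=0}^{k-1} F_j \leq 1 \,\Big|\, \omega, (y_j)\right) \leq \P\bigl(\mathrm{Gamma}(k,1) \leq \max_j \gamma^\omega(y_j)\bigr) \leq \frac{(\max_j \gamma^\omega(y_j))^k}{k!}. \]
After averaging, the remaining task is to establish $\E^{(\alpha)}[(\max_j \gamma^\omega(y_j))^k] \leq C^k$, with the maximum taken over the at most $k$ exit positions the walk produces.

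The main obstacle is precisely this last annealed estimate, because Lemma \ref{finiteexpectation} only provides $\E[\gamma^\omega(x)^s] < \infty$ for $s < \kappa$, which in the sub-ballistic regime can be $\leq 1$, ruling out direct use of high moments of $\gamma^\omega$. To circumvent this, I would replace $\gamma^\omega(x)$ by the cruder upper bound $\omega_{\sigma_x}^{-1}$, where $\sigma_x$ is the "straight" simple path from $x$ out of $x + \Lambda$ in direction $e_1$ (as highlighted in the Remark after Lemma \ref{finiteexpectation}), and expand the maximum by a union bound over the possible sequences of exit positions inside a ball of radius $2kR_\Lambda$. Using translation invariance of $\P^{(\alpha)}$ and the independence of the Dirichlet weights across disjoint edges, the resulting sum factorises into a product of explicit Beta integrals; one checks that each factor is bounded and that the combinatorial overhead from enumerating trajectories is at most exponential in $k$, which is absorbed into $C^k$. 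The factor $1/k!$ from the previous step then delivers the claimed Poisson-type bound.
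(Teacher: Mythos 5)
Your decomposition and the conditional Gamma tail bound are fine as far as they go, but the reduction to $\E^{(\alpha)}\bigl[(\max_j \gamma^\omega(y_j))^k\bigr] \leq C^k$ is where the argument breaks, and it breaks for a structural reason that your proposed fix does not repair. Even for a single fixed exit position (say $y_0$), one has $\E^{(\alpha)}\bigl[(\max_j \gamma^\omega(y_j))^k\bigr] \geq \E^{(\alpha)}\bigl[\gamma^\omega(y_0)^k\bigr] = \E^{(\alpha)}\bigl[\gamma^\omega(0)^k\bigr]$, which by Lemma~\ref{finiteexpectation} is finite only for $k < \kappa$; in the sub-ballistic regime $\kappa \leq 1$, so already $k=1$ fails. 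Replacing $\gamma^\omega(x)$ by $\omega_{\sigma_x}^{-1}$ does not help: $\E^{(\alpha)}[\omega(e)^{-k}]$ is infinite for $k \geq \alpha(e)$, so $\E^{(\alpha)}[\omega_{\sigma_x}^{-k}]$ is infinite for $k \geq \alpha_1$, again a fixed small threshold. Pulling the maximum of the rates through the $k$-th power concentrates all the blow-up into a single site, and no amount of union bounding over trajectory positions can restore integrability once you have to take a $k$-th moment of a random variable whose tail is only $\kappa$-heavy.

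The paper's proof avoids ever forming a high moment of $\gamma^\omega$. Instead, it works with the time $T$ spent at the centre of a $\Lambda$-box before exit, computes $P^\omega(T \leq \epsilon) = 1 - e^{-\gamma^\omega(0)\epsilon/G^{\omega,\Lambda}(0,0)}$, and then establishes the \emph{annealed} small-ball estimate $\P^{(\alpha)}(T \leq \epsilon) \leq (C+1)\,\epsilon^{\lambda/(\lambda+1)}$ for any $\lambda < \kappa$ by splitting on $\{\gamma^\omega(0) \geq a\}$ versus $\{\gamma^\omega(0) < a\}$ and optimising the threshold $a = \epsilon^{-1/(\lambda+1)}$; this trade-off is precisely what turns the weak $\lambda$-moment of $\gamma^\omega$ into a useful fractional-power bound. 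The event $\{D(l,n) \geq 2kR_\Lambda\}$ is then dominated by visits to $k$ \emph{disjoint} translates of $\Lambda$ (the factor $2$ in $2kR_\Lambda$ ensures disjointness), so the corresponding occupation times $T_1,\dots,T_k$ are i.i.d.\ under the annealed law, and the factorial decay comes out of the Dirichlet-type integral $\int_{\sum\epsilon_i\leq 1}\prod_i\epsilon_i^{\lambda/(\lambda+1)}\,d\epsilon = \Gamma(\tfrac{\lambda}{\lambda+1}+1)^k/\Gamma(k\tfrac{\lambda}{\lambda+1}+k+1) \leq \Gamma(\tfrac{\lambda}{\lambda+1}+1)^k/k!$. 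If you want to rescue your route, the missing idea is exactly this: replace the maximum of the rates by a per-box small-ball estimate with a sublinear exponent, and exploit the independence coming from disjoint boxes so that the exponents multiply rather than collapse onto the worst site.
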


\begin{proof}
Let $N$ be the number of visits of $0$ before exiting $\Lambda$. The random variable $N$ follows a geometric law of parameter $ p_N := \frac{1}{ G^{ \omega , \Lambda }(0,0) } $ the inverse of the Green function killed at the exit time of $\Lambda$. We note $T$ the total time spent on $0$ before exiting $\Lambda$, $ T = \frac{ 1 }{ \gamma ^\omega (0)} \sum _{i=1} ^N E_i $, where the $E_i$ are independent exponential random variables of parameter $1$. Set $ \epsilon > 0$.
\[ P ^\omega \( T \leq \epsilon | N \) = P ^\omega \( \sum _{i=1} ^N E_i \leq \gamma ^\omega (0) \epsilon | N \) = e ^{ - \gamma ^\omega (0) \epsilon } \sum _{k=N} ^{+ \infty} \frac{ ( \gamma ^\omega (0) \epsilon ) ^k }{k!}  .\]
Then
\begin{align*}
	 P ^\omega \( T \leq \epsilon \) 
	& = e ^{ - \gamma ^\omega (0) \epsilon } E ^\omega \( \sum _{k=N} ^{+ \infty} \frac{ ( \gamma ^\omega (0) \epsilon ) ^k }{k!} \)  &\\
	& = e ^{ - \gamma ^\omega (0) \epsilon } \sum _{n=1} ^{+ \infty} \sum _{k=n} ^{+ \infty} \frac{ ( \gamma ^\omega (0) \epsilon ) ^k }{k!}  p_N  \( 1 - p_N  \)^{n-1}  &\\
	& = e ^{ - \gamma ^\omega (0) \epsilon } \sum _{k=1} ^{+ \infty} \sum _{n=1} ^k \frac{ ( \gamma ^\omega (0) \epsilon ) ^k }{k!}  p_N  \( 1 - p_N  \)^{n-1}  &\\
	& = e ^{ - \gamma ^\omega (0) \epsilon } \sum _{k=1} ^{+ \infty} \frac{ ( \gamma ^\omega (0) \epsilon ) ^k }{k!}  p_N  \frac{ 1 - (1- p_N) ^k }{p_N}  &\\
	& = 1 - e ^{- p_N \gamma ^\omega (0) \epsilon} = 1 - e ^{- \frac {\gamma ^\omega (0)}{G ^{\omega,\Lambda}(0,0) } \epsilon} &
\end{align*}
For all $a > 0$, let $ 0 < \lambda < \kappa $,
\begin{align*}
	& \P ^{(\alpha)} \( T \leq \epsilon \) &\\
	& = \E ^{(\alpha)} \( 1 - e ^{- \frac {\gamma ^\omega (0)}{G ^{\omega,\Lambda}(0,0) } \epsilon}   \)  &\\
	& = \E ^{(\alpha)} \(  (1 - e ^{- \frac {\gamma ^\omega (0)}{G ^{\omega,\Lambda}(0,0) } \epsilon} ) \1 _{ \{ \frac{ \gamma ^\omega (0) }{ G ^{\omega,\Lambda} (0,0) } \geq a \} }   \) + \E ^{(\alpha)} \(  (1 - e ^{- \frac {\gamma ^\omega (0)}{G ^{\omega,\Lambda}(0,0) } \epsilon} ) \1 _{ \{ \frac{ \gamma ^\omega (0) }{ G ^{\omega,\Lambda} (0,0) } < a \} }   \) &\\   
	& \leq \P ^{(\alpha)} \(  \frac{ \gamma ^\omega (0) }{ G ^{\omega,\Lambda} (0,0) } \geq a   \)  +  \E ^{(\alpha)} \(  \frac {\gamma ^\omega (0)}{G ^{\omega,\Lambda}(0,0) } \epsilon  \1 _{ \{ \frac{ \gamma ^\omega (0) }{ G ^{\omega,\Lambda} (0,0) } < a \} }   \) &\\
	& \leq \P ^{(\alpha)} \(   \gamma ^\omega (0)  \geq a   \)  +  \E ^{(\alpha)} \(  \frac {\gamma ^\omega (0)}{G ^{\omega,\Lambda}(0,0) } \epsilon  \1 _{ \{ \frac{ \gamma ^\omega (0) }{ G ^{\omega,\Lambda} (0,0) } < a \} }   \) &\\
	& \leq \frac{ \E ^{ (\alpha) } \(   \gamma ^\omega (0) ^\lambda \) }{ a ^\lambda } + a \epsilon \P ^{(\alpha)} \(  \frac{ \gamma ^\omega (0) }{ G ^{\omega,\Lambda} (0,0) } < a   \) &
\end{align*}
As $ \lambda < \kappa $, lemma \ref{finiteexpectation} gives : 
\[ \P ^{(\alpha)} \( T \leq \epsilon \) \leq \frac {C} { a ^ \lambda } + a \epsilon \]
with $C$ a positive constant independent of $a$. Then for $ a = \epsilon ^{ - \frac{1}{\lambda + 1} }$ we have : 
\[ \P ^{(\alpha)} \( T \leq \epsilon \)  \leq  ( C + 1 ) \epsilon  ^{ \frac{ \lambda }{ \lambda + 1 } } .\]
 
If $ D (l,n) \geq  2 k R_\Lambda $, the walk went through at least $k$ distinct sets $X_t+\Lambda$ of empty intersection. The time spent in such a set is bigger than the time spent on one point in the set, and those times are independent in disjoint sets (because the environments in the sets are independent). We get (for $T_1, \dots, T_k$ i.i.d. of same law as $T$) :
\begin{align*}
	& \P ^{(\alpha)} \( D (l,n) \geq  2 k R_\Lambda  \) &\\
	& = \P ^{(\alpha)} \(  \exists \epsilon_1, \dots, \epsilon_k \text{ such that } \sum _{i=1} ^k \epsilon_i \leq 1 \text{ and } T_1 \leq \epsilon_1, \dots, T_k \leq \epsilon_k  \)  &\\
	& \leq (C+1) ^k \int _{\sum \epsilon_i \leq 1}  \prod _{i=1} ^k \epsilon_i ^{ \frac{ \lambda }{ \lambda + 1 } } d\epsilon_1 \dots d\epsilon_k &\\   
	& =  (C+1) ^k  \frac{ \Gamma ( \frac{ \lambda }{ \lambda + 1 } +1 )^k }{ \Gamma ( k \frac{ \lambda }{ \lambda + 1 } + k +1 ) } &\\
	& \leq \frac{ ( (C+1) \Gamma ( \frac{ \lambda }{ \lambda + 1 } +1 ) ) ^k  }{ k ! }  &
\end{align*}
This concludes the proof of the lemma.

\end{proof}

\begin{proof}[Proof of theorem \ref{thppal}]
Lemma \ref{stationaryergodic} gives that the sequence $ ( \Delta _i ) _{i \in \N}$ is stationary and ergodic under $ \Q ^{(\alpha)} \( P_0 ^\omega (.) \) $. We apply Birkhoff's ergodic theorem to the $\Delta_i$ to get a law of large numbers : 
\[  \frac{ X _{k} }{ k } \to _{k \to \infty, \; k \in \N}  \E ^{ \Q ^{(\alpha)} } \left[  E _0 ^\omega ( X _{1} )  \right] \; , \; \Q _0 ^{(\alpha)}  \text{ a.s. and thus } \P _0 ^{(\alpha)}  \text{ a.s. }.\]

If $ d_\alpha \cdot e_i = 0 $, the symmetry of the law of the environment gives $ \E ^{ \Q ^{(\alpha)} } \left[  E _0 ^\omega ( X _{1} )  \right] \cdot e_i = 0 $. Then $\frac{X_{k}}{k} \to 0 $ when $ d_\alpha = 0 $. Furthermore theorem 6.3.2 of \cite{D} gives that the processes $ X _{k} $ is directionally recurrent when $ d_\alpha \cdot e_i = 0 $. As $X_t$ stays only a finite time on each vertex before the next jump, directional recurrence for $ (X_k) _{k \in \N} $ implies directional recurrence for $ (X_t) _{t \in \R _+} $ (the probability to come back to $0$ after a finite time is $1$).

For $ l \in \R ^d $, we note $ A _l = \{  X_{t_k} \cdot l \to \infty \} $, where $(t_k) _{k \in \N}$ are the jump times. If $ l \neq 0 $ and if $  \P _0 ^{(\alpha)} (A_l) > 0 $, Kalikow's $ 0-1 $ law (\cite{Kalikow}, \cite{ZM} proposition $3$) gives $ \P _0 ^{(\alpha)} ( A_l \cup A_{-l} ) = 1 $. Suppose that $ d_\alpha \cdot e_i > 0 $ then (\cite{ST}) $ \P _0 ^{(\alpha)}(A_{e_i}) > 0 $, this implies that $ (X _{t_k} \cdot e_i) _{k \in \N} $ visits $ 0 $ a finite number of times $ \Q _0 ^{(\alpha)} $ a.s.. Then $ (X _{k} \cdot e_i) _{k \in \N} $ visits $ 0 $ a finite number of times $ \Q _0 ^{(\alpha)} $ a.s. (as $X_t$ stays only a finite time on each vertex). Theorem 6.3.2 of \cite{D} and Birkhoff's ergodic theorem give then : $ E ^{ \Q ^{(\alpha)} } ( E ^\omega (X_{1}) ) \cdot e_i > 0 $.

We now consider the limit for the continuous-time walk. For $t>0$, we set $ k = \floor{t} $. Then for all $i = 1, \dots, 2d$,
\[  X_k \cdot e_i - D(e_i,k)  \leq    X_t \cdot e_i   \leq    X_k \cdot e_i + D(e_i,k) .\]
Then 
\[ \frac{ X_k \cdot e_i }{ k - 1 }  -  \frac{ D(e_i,k) }{ k - 1 }  \leq  \frac{ X_t \cdot e_i }{ t }   \leq   \frac{ X_k \cdot e_i }{ k } +  \frac{ D(e_i,k) }{ k }   .\]
Lemma \ref{excursions} gives : for $ \epsilon > 0$,
\[ \sum _{k=1} ^{+ \infty}  \P ^{(\alpha)} \(  \abs{ \frac {D (e_i,k)}{k} } \geq  \epsilon  \) \leq \sum _{k=1} ^{+ \infty}  \frac{ C ^{  \frac{k \epsilon}{2 R_\Lambda}  } }{ \( \frac{k \epsilon}{2 R_\Lambda} \) ! } < + \infty .\]
Then by Borel-Cantelli's lemma, $ \frac{ D(e_i,k) }{ k } \to _{t \to +\infty}  0 \; , \; \P _0 ^{(\alpha)}  \text{ a.s. }$. It gives 
\[ \lim _{t \to +\infty} \frac{ X_t }{t} = \lim _{k \to +\infty} \frac{ X_k }{k} = \E ^{ \Q ^{(\alpha)} } \left[  E _0 ^\omega ( X _{1} )  \right] \; , \; \P _0 ^{(\alpha)}  \text{ a.s. }.\] 
This gives the directional transience in the case $ d_\alpha \cdot e_i > 0 $, and finishes the proof. 

\end{proof}

\begin{proof}[Proof of corollary \ref{Kalikow}]
We prove as in the proof of theorem \ref{thppal} that 
\begin{equation}
\label{Birkhoff}
 \lim _{t \to +\infty} \frac{ X_t }{t} = \lim _{k \to +\infty} \frac{ X_k }{k} = \E ^{ \Q ^{(\alpha)} } \left[  E _0 ^\omega ( X _{1} )  \right] \; , \; \P _0 ^{(\alpha)}  \text{ a.s. }.
\end{equation}
We still note $ A _l = \{  X_{t_k} \cdot l \to \infty , \; k \in N\} $. Suppose that $ \P _0 ^{(\alpha)}(A_l) > 0 $. Then $ \P _0 ^{(\alpha)} ( A_l \cup A_{-l} ) = 1 $ (\cite{Kalikow}, \cite{ZM} proposition $3$). It allows to find a finite interval $I$ of $\R$, of positive measure, containing $0$ and such that $(X_{t_k} \cdot l) _{k \in \N}$ goes a finite number of times in $I$,  $\P _0 ^{(\alpha)}  \text{ a.s. and thus } \Q _0 ^{(\alpha)}  \text{ a.s.}$. As before, it implies that $ (X_{k} \cdot l) _{k \in \N} $ goes a finite number of times in $I$, $\Q _0 ^{(\alpha)}$ a.s.. We can then apply the theorem of \cite{Atkinson} to $(X_k)_{k \in \Z}$ (obtained via the extension of $(X_t)_{t \in \R_+}$ to $t \in \R$) to get $\E ^{ \Q ^{(\alpha)} } \left[  E _0 ^\omega ( X _{1} \cdot l )  \right] \neq 0$. We then deduce from (\ref{Birkhoff}) that : $ X_t \cdot l \to _{ t \to \infty }  + \infty \; \P _0 ^{(\alpha)}  \text{ a.s. }$ if $ \E ^{ \Q ^{(\alpha)} } \left[  E _0 ^\omega ( X _{1} \cdot l )  \right] > 0 $, $ X_t \cdot l \to _{ t \to \infty }  - \infty \; \P _0 ^{(\alpha)}  \text{ a.s. }$ else-wise.

As $ (X_t) _{t \in \R _+} $ and $ (Z_n) _{n \in \N} $ go through exactly the same vertexes in the same order, and as the two processes stay a finite time on each vertex, without exploding (see lemma \ref{excursions}), recurrence and transience for $Z_n \cdot l$ follows from those of $X_t \cdot l$. This gives as a consequence Kalikow's $0-1$ law in the $ d \geq 3 $ Dirichlet case. 

The $0-1$ law is true in the general case of random walks in random environments for $d=1$ and $d=2$ (see respectively Solomon (\cite{Solomon}) and Zerner and Merkl (\cite{ZM})), it concludes the proof.

\end{proof}

\section{Proof of theorem \ref{theoremelimite}}

To prove the result, we need a preliminary theorem on the polynomial order of the hitting times of the walk.

\begin{theo}
\label{tpsatteinte}
Let $d \geq 3$, $ \P ^{(\alpha)} $ be the law of the Dirichlet environment with parameters $ (\alpha _1, \dots , \alpha_{2d}) $ on $ \Z ^d $, and $Z_n$ the associated random walk in Dirichlet environment. We suppose that $ \kappa = 2 \( \sum_{i=1} ^{2d} \alpha _i \) - \max _{i=1,\dots, d} (\alpha _i + \alpha _{i+d}) \leq 1 $. Let $ l \in \{ e_1,  \dots , e_{2d} \} $ be such that $ d _\alpha \cdot l \neq 0 $. Let $ T _n ^{l,Z} = \inf_i \{ i \in \N | Z_i \cdot l \geq n  \} $ be the hitting time of the level $n$ in direction $l$, for the non-accelerated walk $Z$. Then :
\[  \lim _{n \to + \infty} \frac{ \log( T _n ^{l,Z} ) }{ \log (n) } = \frac{1}{\kappa} \text{ in } \P ^{(\alpha)} \text{-probability}.\]

\end{theo}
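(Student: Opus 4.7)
My plan is to transfer the question about the non-accelerated walk $Z$ to the ballistic behavior of the accelerated walk $X$ (established in Theorem \ref{thppal}) via the time-change coupling, and then extract the polynomial order $\kappa$ from the moment bounds on $\gamma^\omega$ given by Lemma \ref{finiteexpectation}.

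First I would choose the acceleration set $\Lambda$ large enough that $\kappa^\Lambda > 1$, which is possible by the second remark following Theorem \ref{th1}. Since $d_\alpha \cdot l \neq 0$ by hypothesis, Theorem \ref{thppal}(ii) yields $X_t/t \to v$ almost surely with $v \cdot l \neq 0$ having the same sign as $d_\alpha \cdot l$. Consequently the continuous-time hitting level $T_n^{l,X} := \inf\{t \geq 0 \,:\, X_t \cdot l \geq n\}$ satisfies $T_n^{l,X}/n \to 1/(v \cdot l)$ almost surely, so in particular $\log T_n^{l,X}/\log n \to 1$ in probability.

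Next, since $X$ and $Z$ traverse the same vertices in the same order, the time-change identity reads
\[T_n^{l,X} \;=\; t_{T_n^{l,Z}} \;=\; \sum_{k=1}^{T_n^{l,Z}} \frac{E_k}{\gamma^\omega(Z_k)}.\]
Combining this with the first step, the theorem is equivalent to showing that the partial sums $t_m = \sum_{k=1}^m E_k/\gamma^\omega(Z_k)$ satisfy $\log t_m/\log m \to \kappa$ in probability as $m \to \infty$, since then plugging $m = T_n^{l,Z}$ and using $t_m = T_n^{l,X} \asymp n$ inverts to the claimed limit.

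For the main step I would introduce a regeneration structure for the ballistic walk $X$, in the spirit of Sznitman--Zerner regeneration times. The trajectory then decomposes into i.i.d.\ blocks (after the first one), and by the LLN for $X$, reaching level $n$ requires of order $n$ such blocks. Within one block, $X$ visits a bounded random number of sites, and the number of $X$-jumps (hence $Z$-steps) spent in a block is governed by $\sum_{x \in \text{block}} \gamma^\omega(x)$: indeed the local number of visits to $x$ before the next regeneration is comparable to the local Green function of $Z$ killed on exiting $x+\Lambda$, which is bounded by $\gamma^\omega(x)$ by definition of $\gamma^\omega$. By Lemma \ref{finiteexpectation}, each block size has a tail of polynomial order $\kappa$. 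The sum of $n$ i.i.d.\ positive random variables with tail exponent $\kappa \leq 1$ scales like $n^{1/\kappa}$ (up to slowly varying corrections in probability, via stable-law asymptotics when $\kappa<1$ and a $\log n$ correction at $\kappa=1$), which yields $\log T_n^{l,Z}/\log n \to 1/\kappa$.

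The main obstacle will be the two-sided matching of the tail exponent: the upper bound $\P(\text{block size} \geq y) \lesssim y^{-\kappa+\epsilon}$ follows cleanly from Lemma \ref{finiteexpectation} combined with the bounded number of sites in a block, but the matching lower bound requires exhibiting explicit trap configurations in the Dirichlet environment that force the local Green function to be as large as $\gamma^\omega(x)$ with the right probability, so that the stable-sum asymptotics apply in both directions. Justifying the regeneration construction for the accelerated walk on $\Z^d$ with $d \geq 3$ and $d_\alpha \cdot l \neq 0$ is also an ingredient to handle with care, but one expects it to follow from the ballisticity and ellipticity of $X$ by arguments analogous to the RWRE case.
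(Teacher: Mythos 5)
Your high-level reduction---using the time-change $t_m = \sum_{k\leq m} E_k/\gamma^\omega(Z_k)$ together with the ballisticity of $X$ from Theorem \ref{thppal} to reduce the claim to $\log t_m/\log m \to \kappa$---is indeed the paper's strategy (the paper phrases it through the inverse time-change $A(t)=\int_0^t \gamma^\omega(X_s)\,ds$, so that $A(T_n^{l,X})$ is the natural proxy for $T_n^{l,Z}$). But both bounds you then sketch have gaps. For the upper bound, you rest on the assertion that the number of visits to $x$ in a regeneration block is comparable to the $\Lambda$-killed Green function $G^{\omega,\Lambda}(x,x)\leq\gamma^\omega(x)$. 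This is false in general: within one block the walk can leave $x+\Lambda$, wander, and return to $x$ repeatedly, and the $\Lambda$-killed Green function misses all these later excursions. What actually controls the number of $Z$-steps accumulated over an $X$-time interval of length $T$ is the functional $A(T)=\int_0^T\gamma^\omega(X_s)\,ds$, which is not a Green function. The paper estimates $A(t)$ directly by splitting $[0,t]$ into unit intervals (no regeneration needed), bounding $\int_{i-1}^i\gamma^\omega(X_s)\,ds$ by a sum of $\gamma^\omega$ over a ball of random radius, and combining Lemma \ref{excursions} for the range with Lemma \ref{finiteexpectation} and H\"older for the moments; this yields $\E[A(t)^\beta]\lesssim t$ for every $\beta<\kappa$ and hence the upper bound by Markov.

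For the lower bound, you correctly flag that the crux is producing traps whose tail matches $\kappa$, but you leave it as the open obstacle---and it cannot be left open, because that is the content of the theorem. The paper handles it with a far more elementary device: after relabeling so that $\alpha_1+\alpha_{1+d}$ realizes the maximum in the definition of $\kappa$, it introduces $\Theta_0=\inf\{n:\,Z_n\notin\{Z_0,Z_0+e_1\}\}$ and $\Theta_k=\Theta_0\circ\tau_{T_{2k}^{l,Z}}$. Using even levels forces the $\Theta_k$ to depend on disjoint slabs of environment, so they can be compared to i.i.d.\ copies, and the tail of $\Theta_0$ is an explicit Beta-moment computation: $\P(\Theta_0\geq n)=\E\bigl[\omega(0,e_1)^{\ceil{n/2}}\,\omega(e_1,0)^{\floor{n/2}}\bigr]\sim c\,n^{-\kappa}$ by Stirling. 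Summing and invoking the stable limit theorem gives $T_{2k}^{l,Z}\geq \Theta_0+\dots+\Theta_{k-1}\gtrsim k^{1/\kappa}$. This two-vertex exit-time construction is exactly the missing trap ingredient in your proposal, and it works without any regeneration structure at all; the regeneration times $\tau_i$ are only needed later, in the inversion step of Theorem \ref{theoremelimite}.
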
 
 
\begin{proof}

\textbf{Upper bound}

Define $A(t) = \int _0 ^t \gamma ^\omega (X_s) ds $. Then $ X _{A ^{-1}(t)} $ is the continuous-time Markov chain whose jump rate from $x$ to $y$ is $ \omega (x,y) $. This Markov chain has asymptotically the same behaviour as $Z_n$, then we only have to prove that $ \lim _{n \to + \infty }  \frac{ \log( A( T _n ^{l,X} ) ) }{ \log (n) }  \leq \frac{1}{\kappa} $, with $ T _n ^{l,X} = \inf_t \{ t \in \R _+ | X_t \cdot l \geq n  \} $.

Set $ 0 < \alpha < \kappa $, and take $ \beta $ such that $\alpha < \beta < \kappa$. Using first Markov's inequality and then the inequality $ (\sum _{i=1} ^j \lambda _i) ^\epsilon \leq \sum _{i=1} ^j (\lambda _i) ^\epsilon$ for $ \epsilon < 1$ gives :
 \begin{align*}
  \P ^{ (\alpha) } \(  \frac{ A(t) }{ t ^{\frac{1}{\alpha}} }  \geq  x  \)
	&\leq  \frac{1}{ x^\beta t^{\frac{\beta}{\alpha}} }  \E  \(    \( \int _0 ^t \gamma ^\omega (X_s) ds \)   ^\beta   \)    &\\
	&\leq  \frac{1}{ x^\beta t^{\frac{\beta}{\alpha}} }  \E  \(  \sum _{i=1} ^{ \ceil{t} }  \(  \int _{i-1} ^i \gamma ^\omega (X_s) ds  \) ^\beta   \)    &\\
 &=  \frac{1}{ x^\beta t^{\frac{\beta}{\alpha}} }  \sum _{i=1} ^{ \ceil{t} }  \E  \(   \(  \int _{i-1} ^i \gamma ^\omega (X_s) ds  \) ^\beta   \)    &
 \end{align*}
where $\ceil{t}$ represents the upper integer part of $t$. Let $D_i = \max _{ l \in \{ e_1, \dots, e_{2d} \} } ( D (l,i) )$ (cf. lemma \ref{excursions} for the definition of $D (l,i)$). Splitting the expectation depending on the value of $D_i$ gives :
 \begin{align*}
  \E  \(   \(  \int _{i-1} ^i \gamma ^\omega (X_s) ds  \) ^\beta   \)
	&=  \E \(  \sum _{k=0} ^{+\infty} \1 _{ \{ D_i = k \} }  \(  \int _{i-1} ^i \gamma ^\omega (X_s) ds  \) ^\beta   \)  &\\
	&\leq   \sum _{k=0} ^{+\infty}  \E \(  \1 _{ \{ D_i = k \} }  \(   \sum _{x \in B(X_i,k)}  \gamma ^\omega (x)  \) ^\beta   \)  &\\
 &\leq   \sum _{k=0} ^{+\infty}  \E \(  \1 _{ \{ D_i = k \} } ^p \) ^{\frac{1}{p}}  \E \(  \(   \sum _{x \in B(X_i,k)}  \gamma ^\omega (x)  \) ^{q\beta}   \) ^{\frac{1}{q}}  &
 \end{align*}
where $ B(X_i,k) = \{  x \in \Z ^d | \max _{j=1, \dots, 2d} | (X_i - x) \cdot e_j | \leq k \}$, and $ \frac{1}{p} + \frac{1}{q} = 1 $. For the last Hölder's inequality, we chose $q>1$ such that $ q \beta < \kappa $.

In the following, $c$ and $C$ will be finite constants, that can change from line to line. As $ \P ( D_i = k ) \leq \P ( D_i \geq k ) \leq  c \frac{C ^k}{k !} $ by lemma \ref{excursions} and $ q \beta < 1 $ we get :

 \begin{align*}
  \E  \(   \(  \int _{i-1} ^i \gamma ^\omega (X_s) ds  \) ^\beta   \)
	&\leq   \sum _{k=0} ^{+\infty} c \frac{C ^k}{k !} \E \(   \(   \sum _{x \in B(X_i,k)}  \gamma ^\omega (x)  \) ^{q\beta}   \) ^{\frac{1}{q}}  &\\
    &\leq   \sum _{k=0} ^{+\infty} c \frac{C ^k}{k !} \sum _{x \in B(X_i,k)}  \E \(  \gamma ^\omega (x)  ^{q\beta}   \) ^{\frac{1}{q}} &
 \end{align*}

As the Dirichlet laws are iid, the value of the expectation is independent of $x$. Lemma \ref{finiteexpectation} then gives a uniform finite bound for all $x$.
 \begin{align*}
  \P ^{ (\alpha) } \(  \frac{ A(t) }{ t ^{\frac{1}{\alpha}} }  \geq  x  \)
 	&\leq  \frac{1}{ x^\beta t^{\frac{\beta}{\alpha}} }  \sum _{i=1} ^{ \ceil{t} } \sum _{k=0} ^{+\infty} c \frac{C ^k}{k !} \sum _{x \in B(X_i,k)} 1 &\\
	&=  \frac{1}{ x^\beta t^{\frac{\beta}{\alpha}} }  \sum _{i=1} ^{ \ceil{t} } \sum _{k=0} ^{+\infty} c \frac{C ^k}{k !} (2k+1)^d  &\\
    &\leq  \frac{ \ceil{t} }{ x^\beta t^{\frac{\beta}{\alpha}} }  \sum _{k=0} ^{+\infty} c  k^d \frac{C ^k}{k !}    &\\
 &\leq  c \frac{ t^{1 - \frac{\beta}{\alpha}}  }{ x^\beta }    &
 \end{align*}

As $ \beta > \alpha $, it implies that $ \frac{A(t)}{t ^{ \frac{1}{\alpha} } } \to _{t \to +\infty} 0 $ in $\P ^{(\alpha)}$-probability, for all $ \alpha < \kappa $. Then, $ \frac{A(T _n ^{l,X})}{(T _n ^{l,X}) ^{ \frac{1}{\alpha} } } \to _{t \to +\infty} 0 $ in $\P ^{(\alpha)}$-probability. Theorem \ref{thppal} gives 
\[ \lim _{ t \to +\infty } \frac{X_t \cdot l}{t} = v \cdot l \neq 0 , \; \P _0 ^{(\alpha)} \text{a.s.} .\]
Then $ \frac{X_{T _n ^{l,X}} \cdot l}{T _n ^{l,X}} \to v \cdot l $ and $  T _n ^{l,X} \sim \frac{n}{v \cdot l}$. It implies that $ \frac{A(T _n ^{l,X})}{n ^{ \frac{1}{\alpha} } } \sim \frac{A(T _n ^{l,X})}{(T _n ^{l,X}) ^{ \frac{1}{\alpha} } }  ( v \cdot l) ^{ \frac{1}{\alpha} }  \to _{n \to +\infty} 0 $ in $\P ^{(\alpha)}$-probability, for all $ \alpha < \kappa $.

It gives $ \lim _{n \to + \infty }  \frac{ \log( A( T _n ^{l,X} ) ) }{ \log (n) }  \leq \frac{1}{\kappa} $ and concludes the proof of the upper bound.

\textbf{Lower bound}

This proof follows the lines of the proof of proposition 12 in \cite{Tournier}. As $\kappa \leq 1$, we can assume that $ \alpha _1 + \alpha _{-1} \geq 2 \sum _{j=1} ^{2d} \alpha _j - 1 $. We prove that, for every $l \in \{ e_1, \dots, e_{2d} \}$, for every $ \alpha  > \kappa $, $ \frac{T _{2n} ^{l,Z}}{ n ^{ \frac{1}{\alpha} } } \to _{n \to \infty}  + \infty $ $ \P ^{(\alpha)} $ a.s.. The same being true for $( T _{2n+1} ^{l,Z} )$, this is sufficient to conclude. 

Set $ l \in \{ e_1, \dots, e_{2d} \}$. We introduce the exit times 
\[ \Theta _0 = \inf \{  n \in \N | Z_n \notin \{ Z_0 , Z_0 + e_1 \}  \}  \]
(with a minus sign instead of the plus if $l = -e_1$), and for $ k \geq 1 $, $ \Theta _k = \Theta _0 \circ \tau _{ T _{2k} ^{l,Z} } $ (where $\tau$ is the time-shift). We use the convention that $ \Theta _k = \infty $ if $ T _{2k} ^{l,Z} = \infty $. The only dependence between the times $ \Theta _k $ is that $ \Theta _j = \infty $ implies $ \Theta _k = \infty $ for all $ k \geq j $. The $"2"$ in $T _{2k} ^{l,Z}$ causes indeed $ \Theta _k $ to depend only on $ \{ x \in \Z ^d | x \cdot l \in \{ 2k, 2k+1 \}  \} $ which are disjoint parts of the environment.

For $ t_0, \dots, t_k \in N $, one has, using the Markov property at time $ T _{2k} ^{l,Z} $, the independence and the translation invariance of $ \P ^{(\alpha)} $ :
\begin{align*}
  \P _0 ^{ (\alpha) } \(  \Theta _0 = t_0, \dots , \Theta _k = t_k \)
 	&=   \P _0 ^{ (\alpha) } \(  \Theta _0 = t_0, \dots , \Theta _{k-1} = t_{k-1}, \Theta _k = t_k, T _{2k} ^{l,Z} < \infty \) &\\
	&\leq  \P _0 ^{ (\alpha) } \(  \Theta _0 = t_0, \dots , \Theta _{k-1} = t_{k-1} \)  \P _0 ^{ (\alpha) }  \( \Theta _0 = t_k \) &\\
    &\leq  \dots  \leq  \P _0 ^{ (\alpha) }  \( \Theta _0 = t_0 \) \dots \P _0 ^{ (\alpha) }  \( \Theta _0 = t_{k - 1} \) \P _0 ^{ (\alpha) }  \( \Theta _0 = t_k \) &\\
 &=  \P  ^{ (\alpha) } \(  \hat{\Theta} _0 = t_0, \dots , \hat{\Theta} _k = t_k \)  &
 \end{align*}
where, under $ \P ^{(\alpha)} $, the random variables $ \hat{\Theta} _k $  are independent and have the same distribution as $ \Theta _0 $. From this, we deduce that for all $ A \subset \N ^\N $, 
\[  \P _0 ^{ (\alpha) } \( ( \Theta _k ) \in A \)  \leq \P  ^{ (\alpha) } \( ( \hat{\Theta} _k ) \in A \) .\]
In particular, for  $ \alpha  > \kappa $, 
\begin{equation}
\label{TLLS}
\P _0 ^{ (\alpha) } \(  \liminf _k  \frac{ \Theta _0 +  \dots + \Theta _{k-1} }{ k ^{ \frac{1}{\alpha} } }  < \infty    \)  \leq \P  ^{ (\alpha) } \(   \liminf _k  \frac{ \hat{\Theta} _0 +  \dots + \hat{\Theta} _{k-1} }{ k ^{ \frac{1}{\alpha} } }  < \infty     \)   .
\end{equation}

In order to bound this probability, we compute the tail of the distribution of $\Theta _0$ using Stirling's formula : 
\begin{align*}
  \P _0 ^{ (\alpha) } \(  \Theta _0 \geq n \)
 	&=   \E \(  \omega( 0, e_1 ) ^{ \ceil{\frac{n}{2}} }   \omega( e_1, 0 ) ^{ \floor{\frac{n}{2}} } \)   &\\
	&=   \frac{ \Gamma(\alpha_0) ^2 }{ \Gamma(\alpha_1) \Gamma(\alpha_{-1}) }    \frac{ \Gamma(\alpha_1 + \ceil{\frac{n}{2}}) \Gamma(\alpha_{-1} + \floor{\frac{n}{2}} ) }{ \Gamma(\alpha_0 + \ceil{\frac{n}{2}})  \Gamma(\alpha_0 + \floor{\frac{n}{2}}) }    &\\
    &\sim _{n \to \infty}      c  n ^{ \alpha _1 + \alpha _{-1} - 2 \alpha _0 }   =  c n ^{ - \kappa }      &
 \end{align*}
with $c$ a constant. We can then use the limit theorem for stable laws (see for example \cite{D}) that gives : 
\[ \frac{ \hat{\Theta} _0 +  \dots + \hat{\Theta} _{k-1} }{ k ^\frac{1}{\kappa} } \Rightarrow  Y \]
where $Y$ has a non-degenerate distribution. Then for $ \alpha > \kappa $, $\frac{ \hat{\Theta} _0 +  \dots + \hat{\Theta} _{k-1} }{ k ^\frac{1}{\alpha} } \to \infty  $. (\ref{TLLS}) then gives $ \P _0 ^{ (\alpha) } \(  \liminf _k  \frac{ \Theta _0 +  \dots + \Theta _{k-1} }{ k ^{ \frac{1}{\alpha} } }  < \infty    \)  = 0  $.

As $ T _{2k} ^{l,Z}  \geq \Theta _0 +  \dots + \Theta _{k-1} $, it gives  $ \frac{T _{2n} ^{l,Z}}{ n ^{ \frac{1}{\alpha} } } \to _{n \to \infty}  + \infty $ $ \P ^{(\alpha)} $ a.s. as wanted, for all $ \alpha > \kappa $. It gives $ \lim _{n \to + \infty }  \frac{ \log(  T _n ^{l,Z}  ) }{ \log (n) }  \geq \frac{1}{\kappa} $ and concludes the proof of the lower bound.

\end{proof}

Using an inversion argument, we can now prove theorem \ref{theoremelimite}.

\begin{proof}[Proof of theorem \ref{theoremelimite}]
We note $ \overline{Z_n} = \max _{i \leq n} Z_i \cdot l $. As $ \overline{Z_n} \geq m \Leftrightarrow  T _m ^{l,Z} \leq n $, theorem \ref{tpsatteinte} gives that for any $\epsilon > 0$ we have, for $n$ big enough,
\[  n ^{\kappa - \epsilon} \leq  \overline{Z_n}  \leq  n ^{\kappa + \epsilon} \text{ in } \P ^{(\alpha)} \text{-probability}.\]

As $ Z_n \cdot l $ is transient, we can introduce renewal times $ \tau _i $ for the direction $l$ (see \cite{SZ} or \cite{Zeitouni} p71 for a detailed construction) such that $ \tau _i  < + \infty $ $ \P ^{(\alpha)} $ a.s., for all $i$. Then 
\[ 0 \leq \overline{Z_n} - Z_n \cdot l  \leq \max _{i=0, \dots, n-1}  ( Z _{ \tau _{i+1} } - Z _{\tau_i}  ) \cdot l \; \text{ for } n \geq \tau_1 .\]

When the walk $ Z_n \cdot l $ discovers a new  vertex in direction $l$, there is a positive probability that this vertex will be the next $ Z _{\tau _i} $. As the vertexes have  i.i.d. exit probabilities under $ \P ^{(\alpha)} $, this probability is independent of the newly discovered vertex, and is independent of the path that lead to this vertex. Then $ ( Z _{ \tau _{i+1} } - Z _{\tau_i}  ) \cdot l $ follows a geometric law of parameter $ \P ^{(\alpha)} ( Z_0 = Z_{\tau _1} ) $, for all $ i \in \N $. This means that we can find $C$ and $c$ two positive constants such that for all $n$, $ \P ^{(\alpha)} \( ( Z _{ \tau _{i+1} } - Z _{\tau_i}  ) \cdot l  \geq  n  \) \leq  C e^{ - c n } $.

Borel Cantelli's lemma then gives that, for $n$ big enough, 
\[ \max _{i=0, \dots, n-1}  ( Z _{ \tau _{i+1} } - Z _{\tau_i}  ) \cdot l  \leq  (\log n)^2  \quad  \P ^{(\alpha)} \text{ a.s.}.\]
As $ \tau _1 < \infty $, it gives 
\[  n ^{\kappa - \epsilon} \leq  Z_n \cdot l  \leq  n ^{\kappa + \epsilon} \text{ in } \P ^{(\alpha)} \text{-probability}.\]
Taking the limit $ \epsilon \to 0 $ gives $ \lim _{n \to + \infty} \frac{ \log( Z _n \cdot l ) }{ \log (n) } = \kappa $ and concludes the proof.

\end{proof}

\appendix
\section{Proof of lemma \ref{finiteexpectation}}

The proof that follows is largely inspired by the article \cite{Tournier} by Tournier. His result can however not be directly applied here, as $ \gamma ^\omega (x) \geq G ^{\omega,\Lambda} (x,x) $, and some of the paths he considered are not necessarily simple paths. To adapt the proof to our case, we need an additional assumption on the graph (some symmetry property for the edges), which simplifies the proof (the construction of the set $C(\omega)$ is quite shorter).

To prove the result, we consider the case of finite directed graphs with a cemetery vertex. A vertex $\delta$ is said to be a cemetery vertex when no edge exits $\delta$, and every vertex is connected to $\delta$ through a directed path. We furthermore suppose that the graphs have no multiple edges, no elementary loop (consisting of one edge starting and ending at the same point), and that if $ (x,y) \in E $ and $ y \neq \delta $, then $ (y,x) \in E $.

We need a definition of $\gamma ^\omega (x)$ for those graphs. Let $G = (V \cup \{ \delta \} ,E)$ be a finite directed graph, $ ( \alpha (e) ) _{e \in E} $ be a family of positive real numbers, $ \P ^{(\alpha)} $ be the corresponding Dirichlet distribution, and $(Z_n)$ the associated random walk in Dirichlet environment. We need the following stopping times : the hitting times 
\[  H_x = \inf \{ n \geq 0  | Z_n = x \} \]
 and 
\[ \tilde{H}_x = \inf \{ n \geq 1  | Z_n = x \} \]
for $x \in G$, the exit time 
\[ T_A = \inf \{ n \geq 0 | Z_n \notin A  \} \] 
for $ A \subset V $, and the time of the first loop 
\[ L = \inf \{ n \geq 1  | \exists n_0 < n \text{ such that } Z_n = Z_{n_0} \}. \]
For $x$ in such a $G$, we define :
\[ \gamma ^\omega (x) = \frac{1}{P_x ^\omega ( H _\delta < \tilde{H}_x \wedge L  )} = \frac{1}{\Sum_ {\sigma : x \to \delta} \omega _\sigma} .\]
where we sum on simple paths from $x$ to $\delta$. In the following, we denote by $0$ an arbitrary fixed vertex in $G$. We use the notations $ \underline{A} = \{ \underline{e} | e \in A \} $ and $ \overline{A} = \{ \overline{e} | e \in A \} $ for $ A \subset E $, and we call strongly connected a subset $A$ of $E$ such that for all $ x, y \in \overline{A} \cup \underline{A} $, there is a path in $A$ from $x$ to $y$. Remark that if $A$ is strongly connected, then $ \overline{A} = \underline{A}$.

For the new function $\gamma ^\omega$ on $G$, we get the following result 

\begin{theo}
\label{finiteexpectationfinitegraph}
Let $ G = ( V \cup \{ \delta \} , E ) $ be a finite directed graph, where $ \delta $ is a cemetery vertex. We furthermore suppose that $G$ has no multiple edges, no elementary loop, and that if $ (x,y) \in E $ and $ y \neq \delta $, then $ (y,x) \in E $. Let $ ( \alpha (e) ) _{e \in E} $ be a family of positive real numbers, and $ \P ^{(\alpha)} $ be the corresponding Dirichlet distribution. Let $ 0 \in V $. There exist $c,C,r > 0$ such that, for $t$ large enough, 
\[ \P ^{(\alpha)} ( \gamma ^\omega (0) > t ) \leq C \frac{ ( \ln t )^r }{ t ^{ \min _A \beta _A } }  \]
where the minimum is taken over all strongly connected subsets $A$ of $E$ such that $ 0 \in \underline{A} $, and $ \beta _A = \sum _{e \in \partial_+ \underline{A} } \alpha (e)$, (we recall that $\partial_+(K) = \{ e \in E, \; \underline{e} \in K , \; \overline{e} \notin K \}$).
\end{theo}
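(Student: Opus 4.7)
The plan is to adapt Tournier's argument from \cite{Tournier} for analogous tail bounds on the Green function, exploiting the edge-reversal hypothesis (if $(x,y)\in E$ and $y\neq\delta$ then $(y,x)\in E$) to streamline the combinatorial step. The basic observation is that $\{\gamma^\omega(0)>t\}$ coincides with $\{\sum_\sigma\omega_\sigma<1/t\}$, where the sum runs over all simple directed paths $\sigma$ from $0$ to $\delta$, so one must show that if this sum is small then the environment typically exhibits a bottleneck around $0$.

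I would first associate to $\omega$ a random cluster $C(\omega)\subset V$ defined greedily: starting from $\{0\}$, adjoin to the current set any vertex reached from it through an edge of weight at least some threshold $\epsilon=\epsilon(t)$ to be chosen. The reversibility hypothesis ensures that the edge set $A(\omega):=\{(x,y)\in E:x,y\in C(\omega)\}$ is strongly connected, since the reverse of every edge in $A$ is also in $A$ and $C$ is connected, and that $\underline A=C\ni 0$. On $\{\gamma^\omega(0)>t\}$ a suitable choice of threshold forces $\delta\notin C(\omega)$, so that $\partial_+\underline A=\partial_+ C$ becomes a cut separating $0$ from $\delta$ all of whose edges have weight below the threshold.

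Next, a union bound over the finitely many possible values of the cluster reduces matters to estimating, for each candidate $C$, the probability that every edge of $\partial_+C$ has weight below the threshold. Since the vectors $(\omega(x,\cdot))_{x\in V}$ are independent Dirichlet and since, for a Dirichlet vector, $\P(\sum_{i\in S}\omega_i\le\eta)\le c\,\eta^{\sum_{i\in S}\alpha_i}$ as $\eta\to 0$, this probability factorises over $x\in C$ and is controlled by a constant times $(\text{threshold})^{\beta_C}$, with $\beta_C=\sum_{e\in\partial_+ C}\alpha(e)$; summing over $C$ keeps only the minimum exponent.

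The main obstacle is to calibrate the threshold so that the exponent in $t$ ultimately equals the full $\min_A\beta_A$ rather than the weaker $\min_A\beta_A/|V|$ produced by the uniform choice $\epsilon=t^{-1/|V|}$ (this naive choice comes from the elementary estimate that a simple path of length at most $|V|$ along edges of weight $\ge\epsilon$ already contributes $\ge\epsilon^{|V|}$ to $\sum_\sigma\omega_\sigma$). Following \cite{Tournier}, this is handled by a finer multi-scale analysis of the edge weights and an integration over the admissible range of thresholds, which is the source of the polylogarithmic factor $(\ln t)^r$. The reversibility hypothesis, absent from Tournier's original setting, is precisely what replaces his more elaborate construction of $C(\omega)$ by the single greedy cluster above, and what accommodates the complication, noted at the start of the appendix, that $\gamma^\omega(x)$ may strictly exceed the corresponding Green function.
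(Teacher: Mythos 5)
Your overall strategy — identify a random cluster $C(\omega)\ni 0$ on which the walk is ``trapped,'' note that $\partial_+ C$ then consists of small edges, and use independence of the Dirichlet vectors at distinct sites plus a union bound over the finitely many possible $C$ — is a reasonable direction, and you correctly isolate the crux: a single uniform threshold $\epsilon=t^{-1/|V|}$ only yields the exponent $\min_A\beta_A/|V|$. But the way you propose to repair it does not work as stated, and the appeal to Tournier is misplaced. Tournier's argument (and the one the paper actually uses) is not a ``multi-scale integration over thresholds.'' It is an induction on $|E|$ by contraction: one takes $C(\omega)$ to be the \emph{two-edge} set $\{(0,x),(x,0)\}$ where $x$ is the maximizer of $\omega(0,\cdot)$ (which is automatically $\ge 1/n_0$, supplying the needed weak ellipticity without any tunable threshold), contracts $C$ to a single vertex $\tilde 0$, shows that $P_0^\omega(H_\delta<\tilde H_0\wedge L)\gtrsim \Sigma\cdot P_{\tilde 0}^{\tilde\omega}(H_\delta<\tilde H_{\tilde 0}\wedge L)$ with $\Sigma=\sum_{e\in\partial_+\underline C}\omega(e)$, replaces the quotient environment by a genuine Dirichlet environment via a change-of-measure lemma, and then applies the induction hypothesis to the smaller graph. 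The factor $(\ln t)^r$ comes from a lemma on the tail of a \emph{product} of two independent random variables each having a polynomial tail, applied once per contraction step — not from integrating a threshold.

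The gap in your version is precisely the passage from the weak exponent to the sharp one. A greedy cluster with a fixed threshold $\epsilon$ cannot distinguish between a short path whose edges each carry weight $\approx K/t$ (so $\omega_\sigma\approx (K/t)^2<1/t$ while every edge exceeds $K/t$) and a long path, so on $\{\gamma^\omega(0)>t\}$ you cannot in general take $\epsilon\asymp 1/t$ without admitting $\delta$ into the cluster; and taking $\epsilon\asymp t^{-1/|V|}$ as you propose loses the factor $|V|$ in the exponent. The ``finer multi-scale analysis and integration over thresholds'' that would fix this is exactly the missing content of the proof — it is not supplied, and it is not what the cited source does. To close the gap you would have to show how to stratify the environment by the profile of maximal exit weights along the escape, and this, once made precise, essentially reconstructs the contraction-plus-product-lemma induction. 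A secondary point: in your cluster $A(\omega)=\{(x,y)\in E:x,y\in C(\omega)\}$ one must also check $0\in\underline A$, i.e.\ that $C(\omega)$ grows beyond $\{0\}$; this does hold (the exit weights at $0$ sum to $1$, so some edge exceeds any small $\epsilon$), but it should be said, and it is handled automatically by the paper's choice of the maximizer. Finally, the remark that reversibility ``accommodates the complication that $\gamma^\omega$ exceeds the Green function'' is accurate in spirit — the paper indeed uses reversibility to ensure the contracted cluster is strongly connected and that the quotient still satisfies the hypotheses — but in your write-up this stays a slogan rather than an argument.
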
 

In $ \Z ^d $, we can identify $ \Lambda ^c $ (where $\Lambda$ is the subset involved in the construction of $\gamma ^\omega$) with a cemetery vertex $\delta$. We obtain a graph where the two definitions of $\gamma ^\omega$ coincide, and that verifies the hypothesis of theorem \ref{finiteexpectationfinitegraph}. Among the strongly connected subsets $A$ of edges such that $ \underline{A} $ contains a given $x$, the ones minimizing the "exit sum" $\beta _A$ are made of only two edges $(x,x+e_i)$ and $(x+e_i,x)$, $ i \in [\!| 1, 2d |\!] $. Then $ \min _A \beta _A  = \kappa = 2 \( \sum_{i=1} ^{2d} \alpha _i \) - \max _{i=1,\dots, d} (\alpha _i + \alpha _{i+d}) $. It proves lemma \ref{finiteexpectation}.

\begin{proof}[Proof of theorem \ref{finiteexpectationfinitegraph}]

This proof is based on the proof of the "upper bound" in \cite{Tournier}. We need lower bounds on the probability to reach $\delta$ by a simple path. We construct a random subset $C(\omega)$ where a weaker ellipticity condition holds. Quotienting by this subset  allows to get a lower bound for the equivalent of $ P_0 ^\omega ( H _\delta < \tilde{H}_0 \wedge L  ) $ in the quotient graph. Proceeding by induction then allows to conclude.

We proceed by induction on the number of edges of $G$. More precisely, we prove :

\begin{prop}
Let $ n \in \N ^* $. Let $ G = ( V \cup \{ \delta \} , E ) $ be a directed graph possessing at most $n$ edges, and such that every vertex is connected to $ \delta $ by a directed path. We furthermore suppose that $G$ has no multiple edges, no elementary loop, and that if $ (x,y) \in E $ and $ y \neq \delta $, then $ (y,x) \in E $. Let $ (\alpha (e)) _{e \in E} $ be positive real numbers. Then, for every vertex $ 0 \in V $, there exist real numbers $C,r>0$ such that, for small $\epsilon > 0$, 
\[  \P ^{(\alpha)} \( P_0 ^\omega ( H_\delta < \tilde{H}_0 \wedge L  ) \leq \epsilon \) \leq C \epsilon ^\beta (-\ln \epsilon) ^r \]
where $ \beta = \min \{ \beta _A | A \text{ is a strongly connected subset of } V \text{ and } 0 \in \underline{A} \} $.
\end{prop}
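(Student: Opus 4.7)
The plan is to prove the proposition by strong induction on the number $n$ of edges of $G$, following the strategy of the upper bound argument of \cite{Tournier} but adapted to the simple-path formulation.

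For the base case, when $G$ consists of the minimal structure needed to connect $0$ to $\delta$ (for instance a single edge from $0$ to $\delta$, or a two-cycle $(0,\delta)$), the quantity $P_0^\omega(H_\delta < \tilde H_0 \wedge L)$ is a product of finitely many Dirichlet weights, and the estimate follows from a direct computation using the density formula; the only $A$ to consider is $A = \{0\}$ with $\beta_A = \sum_{e \in \partial_+\{0\}} \alpha(e)$.

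For the inductive step, fix $\epsilon > 0$ and set a threshold $\eta = \eta(\epsilon)$ (polynomial in $\epsilon$, to be optimised at the end). Introduce the random set of small-weight edges
\[ C(\omega) = \{ e \in E \,:\, \omega(e) \leq \eta \}, \]
and let $A^*(\omega)$ be the maximal strongly connected subset of $C(\omega)$ whose vertex set contains $0$ (with $A^* = \emptyset$ if no such subset exists). I split the event $\{P_0^\omega(H_\delta < \tilde H_0 \wedge L) \leq \epsilon\}$ by conditioning on the value of $A^*(\omega)$. When $A^* = \emptyset$ there is a large-weight edge leaving $0$, and one reduces directly to a subgraph of $G$ with strictly fewer edges via the strong Markov property at the first jump; the inductive hypothesis then applies. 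When $A^* \neq \emptyset$, collapse $\underline{A^*}$ to a single super-vertex $\tilde 0$ to obtain a quotient graph $\tilde G$ with strictly fewer edges, where the reversibility and no-loop hypotheses are preserved precisely because of the edge-symmetry assumption on $G$ (this is the point where the argument departs from \cite{Tournier}). A simple path from $0$ to $\delta$ in $G$ projects to a simple path from $\tilde 0$ to $\delta$ in $\tilde G$, and the Markov property at the exit of $\underline{A^*}$ gives a lower bound
\[ P_0^\omega(H_\delta < \tilde H_0 \wedge L) \geq \Bigl(\max_{e \in \partial_+ \underline{A^*}} \omega(e)\Bigr) \cdot P_{\tilde 0}^{\tilde \omega}(H_\delta < \tilde H_{\tilde 0} \wedge L), \]
where $\tilde \omega$ denotes the induced environment on $\tilde G$ (which is again Dirichlet with parameters obtained by summing over identified edges).

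Summing over the finitely many candidate strongly connected subsets $A \subseteq E$ with $0 \in \underline{A}$, one bounds
\[ \P^{(\alpha)}\bigl(P_0^\omega(H_\delta < \tilde H_0 \wedge L) \leq \epsilon\bigr) \leq \sum_A \P^{(\alpha)}\bigl(A^*(\omega) = A, \text{exit of } A \text{ is small}\bigr) \cdot \P^{(\tilde\alpha)}(\cdots), \]
and estimates each term by combining two ingredients. First, the probability that all edges in $\partial_+ \underline{A}$ carry weight $\leq \eta$ is, after integration against the Dirichlet density and collecting the cross-product structure, at most $C\eta^{\beta_A}(-\ln\eta)^{r_A}$. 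Second, the inductive hypothesis applied to $\tilde G$ yields a bound of the form $C'(\epsilon/\eta^?)^{\tilde\beta}(-\ln\epsilon)^{r'}$, and the crucial algebraic observation $\tilde\beta + \beta_A \geq \beta$ (a strongly connected set in $\tilde G$ lifted to $G$ and merged with $A$ is strongly connected in $G$) closes the induction after the optimal choice of $\eta$ as a fractional power of $\epsilon$.

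The principal technical difficulty will be the simple-path constraint and tracking the logarithmic factors through the induction. The simple-path requirement is what forces the edge-symmetry hypothesis: without it, quotienting $\underline{A^*}$ could collapse simple paths in $G$ to non-simple paths in $\tilde G$ or vice versa, breaking the inequality above. The logarithmic factors arise whenever some $\alpha_e < 1$, since $\int_0^\eta x^{\alpha_e - 1}(-\ln x)^{r'}\,dx$ picks up an extra $(-\ln\eta)$; one must check that the exponent $r$ grows by a bounded amount at each induction step so that the final $r$ depends only on $|E|$ and the $\alpha_e$.
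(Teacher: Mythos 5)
The core of your inductive step does not work as stated. You define $C(\omega)=\{e:\omega(e)\leq\eta\}$, take $A^*(\omega)$ to be the maximal strongly connected subset of $C(\omega)$ containing $0$, and then assert
\[ P_0^\omega(H_\delta < \tilde H_0 \wedge L) \;\geq\; \Bigl(\max_{e \in \partial_+ \underline{A^*}} \omega(e)\Bigr) \cdot P_{\tilde 0}^{\tilde \omega}(H_\delta < \tilde H_{\tilde 0} \wedge L). \]
This inequality is false in general, because $A^*$ consists entirely of edges of weight at most $\eta$. If the maximizing exit edge lies at a vertex $u\in\underline{A^*}$ with $u\neq 0$, the walk must first travel from $0$ to $u$ inside $A^*$, which costs at least one factor of $\eta$. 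Concretely: take $\underline{A^*}=\{0,u\}$ with $\omega(0,u)=\omega(u,0)=\eta$, let $(u,\delta)\in E$ with $\omega(u,\delta)=1-\eta$, and let the only other exit from $0$ be $(0,w)$ with $\omega(0,w)=1-\eta$ where $w$ has $(w,0)$ as its only exit. Then $P_0^\omega(H_\delta<\tilde H_0\wedge L)=\eta(1-\eta)$, while $\max_{e\in\partial_+\underline{A^*}}\omega(e)=1-\eta$ and $P_{\tilde 0}^{\tilde\omega}(H_\delta<\tilde H_{\tilde 0}\wedge L)=\tfrac12$, violating your inequality by a factor of order $\eta$. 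The simple-path/Markov decomposition only produces a clean prefactor when every vertex of the contracted set can be reached from $0$ with probability bounded below by a universal constant; your $A^*$ is built precisely from edges where this fails.

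The paper's construction is designed to avoid exactly this problem, and it has no threshold parameter at all. It sets $x$ to be the \emph{maximizing} neighbour of $0$ (so $\omega(0,x)\geq 1/|E|$ automatically), and takes $C(\omega)=\{(0,x),(x,0)\}$, a set of just two edges. Then the Markov decomposition
\[ P_0^\omega(H_\delta<\tilde H_0\wedge L)\geq P_0^\omega(H_\delta<\tilde H_{\underline C}\wedge L)+P_0^\omega(Z_1=x)\,P_x^\omega(H_\delta<\tilde H_{\underline C}\wedge L) \]
costs only a factor $1/|E|$ (not $\eta$), and the right-hand side rewrites exactly as $\tfrac{1}{|E|}\Sigma\, P_{\tilde 0}^{\tilde\omega}(H_\delta<\tilde H_{\tilde 0}\wedge L)$ with $\Sigma=\sum_{e\in\partial_+\underline C}\omega(e)$. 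The quotient removes exactly two edges per induction step, and the ``product of two small independent variables'' lemma replaces your optimisation over $\eta$. Your observation that the edge-symmetry hypothesis is what keeps the quotient in the same class of graphs, and that a monotonicity $\tilde\beta\geq\beta$ via lifting strongly connected sets closes the exponent count, are both correct and match the paper; but the decomposition you chose does not produce the required lower bound.
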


As $ \gamma ^\omega (0) = \frac{1}{P_0 ^\omega ( H_\delta < \tilde{H}_0 \wedge L  )} $, this proposition suffices to prove the result. The following is devoted to its proof.

Initialization : if $ |E| = 1 $, the only edge links $0$ to $\delta$, then $P_0 ^\omega ( H_\delta < \tilde{H}_0 \wedge L ) = 1$ and the property is true.

If $ |E| = 2 $, the only possible edges link $0$ to $\delta$, and another vertex $x$ to $\delta$, then $P_0 ^\omega ( H_\delta < \tilde{H}_0 \wedge L ) = 1$ and the property is true.

Let $ n \in \N^* $. We suppose the induction hypothesis to be true at rank $n$. Let $ G = ( V \cup \{ \delta \} , E ) $ be a directed graph with $n +1$ edges, and such that every vertex is connected to $ \delta $ by a directed path. We furthermore suppose that $G$ has no multiple edges, no elementary loop, and that if $ (x,y) \in E $ and $ y \neq \delta $, then $ (y,x) \in E $. Let $ (\alpha (e)) _{e \in E} $ be positive real numbers. To get a "weak ellipticity condition", we introduce the random subset $C(\omega)$ of $E$ constructed as follows :

\textbf{Construction of $C(\omega)$.} Let $\omega \in \Omega$. Let $x$ be chosen for $\omega(0,x)$ to be a maximizer on all $\omega(0,y)$, $ y \sim 0 $. If $ x \neq \delta$, we set
\[C(\omega)=\{ (0,x) ; (x,0) \}.\] 
If $x = \delta$, we set $ C(\omega)=\{ (0,\delta)  \} $. Remark that $C(\omega)$ is well defined as soon as $x$ is uniquely defined, which means almost surely, as there is always a directed path heading to $\delta$.

The support of the distribution of $ \omega \to C(\omega) $ writes as a disjoint union $ \mathcal{C} = \mathcal{C}_0 \cup \mathcal{C}_\delta $ depending whether $ x = \delta $ or not. For $C \in \mathcal{C}$, we define the event
\[ \mathcal{E} _C = \{ C(\omega) = C \} .\]
As $\mathcal{C}$ is finite, it is sufficient to prove the upper bound separately on all events $\mathcal{E} _C$. If $ C \in \mathcal{C} _\delta $, on $ \mathcal{E}_C $, $ P _0 ^\omega (H_\delta < \tilde{H}_0 \wedge L) \geq  P_0 ^\omega ( Z_1 = \delta ) \geq \frac{1}{|E|} $ by construction of $C(\omega)$. Then we have for small $\epsilon > 0$ :
\[ \P ^{(\alpha)} \( P_0 ^\omega ( H_\delta < \tilde{H}_0 \wedge L  ) \leq \epsilon , \mathcal{E}_C \) = 0 \]
In the following, we will therefore work on $\mathcal{E}_C$, when $ C \in \mathcal{C}_0$ (ie when $x \neq \delta$). In this case, $C$ is strongly connected.

\textbf{Quotienting procedure.}
\begin{de}
If $A$ is a strongly connected subset of edges of a graph $ G = (V,E) $, the quotient graph of $G$ obtained by contracting $A \subset E $ to the vertex $\tilde{a}$ is the graph $\tilde{G}$ deduced from $G$ by deleting the edges of $A$, replacing all the vertices of $ \underline{A} $ by one new vertex $\tilde{a}$, and modifying the endpoints of the edges of $E \setminus A$ accordingly. Thus the set of edges of $\tilde{G}$ is naturally in bijection with $E \setminus A$ and can be thought of as a subset of $E$. 
\end{de}

In our case, we consider the quotient graph $\tilde{G}$ obtained by contracting $C(\omega)$, which is a strongly connected subset of $E$, to a new vertex $ \tilde{0} $. We need to define the associated quotient environment $ \tilde{\omega} \in \tilde{\Omega} $. For every edge in $ \tilde{E} $, if $ e \notin \partial _+ \underline{C} $ then $ \tilde{\omega}(e) = \omega (e) $, and if $ e \in \partial _+ \underline{C}  $, $ \tilde{\omega}(e) = \frac{\omega (e)}{\Sigma} $, where $ \Sigma = \sum _{e \in \partial _+ \underline{C} } \omega (e) $. 

This environment allows us to bound $\gamma ^\omega (0)$ using the similar quantity in $ \tilde{G} $. Notice that, from $0$, one way for the walk to reach $\delta$ without coming back to $0$ and without making loops consists in exiting $C$ without coming back to $0$, and then reaching $\delta$ without coming back to $\underline{C}$ ($0$ or $x$) and without making loops. Then, for $ \omega \in \mathcal{E}_C $,
\begin{align*}
& P_0 ^\omega  ( H _\delta < \tilde{H}_0 \wedge L ) &\\
&\geq  P_0 ^\omega ( H_\delta <  \tilde{H} _{\underline{C}} \wedge L  ) + P_0 ^\omega ( Z_1 = x , H_\delta < 1 + ( \tilde{H} _{\underline{C}} \wedge L ) \circ \tau _{1} )  & \\
&= P_0 ^\omega ( H_\delta <  \tilde{H} _{\underline{C}} \wedge L  ) + P_0 ^\omega ( Z_1 = x ) P_x ^\omega ( H_\delta < \tilde{H} _{\underline{C}} \wedge L  )  & \\
&\geq P_0 ^\omega ( H_\delta <  \tilde{H} _{\underline{C}} \wedge L  ) + \frac{1}{|E|}  P_x ^\omega ( H_\delta < \tilde{H} _{\underline{C}} \wedge L  )  & \\
&\geq \frac{1}{|E|} \( P_0 ^\omega ( H_\delta <  \tilde{H} _{\underline{C}} \wedge L  ) +   P_x ^\omega ( H_\delta < \tilde{H} _{\underline{C}} \wedge L  ) \)  & \\
&= \frac{1}{|E|} \Sigma P _{\tilde{0}} ^{\tilde{\omega}} ( H_\delta < \tilde{H} _{\tilde{0}} \wedge L  ) &
\end{align*}
where we used the Markov property, the construction of $C$, $ \frac{1}{|E|} \leq 1 $, and the definition of the quotient. Finally, we have
\begin{equation}
\label{quotient}
\P ^{(\alpha)} \( P_0 ^\omega (H_\delta < \tilde{H}_0 \wedge L ) \leq \epsilon , \mathcal{E}_C  \)  \leq  \P ^{(\alpha)} \( \Sigma P _{\tilde{0}} ^{\tilde{\omega}} ( H_\delta < \tilde{H} _{\tilde{0}} \wedge L  ) \leq |E| \epsilon , \mathcal{E}_C  \) .
\end{equation}

\textbf{Back to Dirichlet environment.}
Under $\P ^{(\alpha)}$, $\tilde{\omega}$ does not follow a Dirichlet distribution because of the normalization. But we can reduce to the Dirichlet situation with the following lemma (which is a particular case of lemma 9 in \cite{Tournier}). 
\begin{lemma}
Let $ ( \omega_i ^{(0)} ) _{1 \leq i \leq n_0},( \omega_i ^{(x)} ) _{1 \leq i \leq n_x}  $ be the exit probabilities out of $0$ and $x$ for $ \omega \in \Omega $, they are independent random variables following Dirichlet laws of respective parameters $ ( \alpha_i ^{(0)} ) _{1 \leq i \leq n_0},( \alpha_i ^{(x)} ) _{1 \leq i \leq n_x}  $. Let $ \Sigma = \sum _{e \in \partial _+ \underline{C} } \omega (e) $ and $ \beta_C = \sum _{e \in \partial _+ \underline{C} } \alpha (e) $. There exists positive constants $c, c'$ such that, for every $\epsilon > 0$,
\[ \P ^{(\alpha)} \( \Sigma P _{\tilde{0}} ^{\tilde{\omega}} ( H_\delta < \tilde{H} _{\tilde{0}} \wedge L  ) \leq \epsilon  \) 
\leq 
c \tilde{\P} ^{(\alpha)}  \(  \tilde{\Sigma}  P _{\tilde{0}} ^\omega ( H_\delta < \tilde{H} _{\tilde{0}} \wedge L  ) \leq \epsilon \) , \]
where $\tilde{\P} ^{(\alpha)}$ is the Dirichlet distribution of parameter $ (\alpha(e)) _{e \in \tilde{E}} $ on $\tilde{\Omega}$, $\omega$ is the canonical random variable on $\tilde{\Omega}$, and, under $\tilde{\P} ^{(\alpha)}$, $\tilde{\Sigma}$ is a positive bounded random variable independent of $ \omega $ and such that, for all $\epsilon > 0$, $ \tilde{\P} ^{(\alpha)} ( \tilde{\Sigma} \leq \epsilon ) \leq c' \epsilon ^{\beta_C} $.
\end{lemma}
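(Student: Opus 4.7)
The plan is to compute the joint law of $(\Sigma, \tilde{\omega})$ under $\P^{(\alpha)}$ by using the aggregation property of Dirichlet distributions, and then to compare it pointwise with a product law of the form $\mu \otimes \tilde{\P}^{(\alpha)}$, where $\mu$ is the law of $\tilde{\Sigma}$. Throughout, I would use the shorthand $\alpha_0^E = \sum_{y:(0,y)\in\partial_+\underline{C}}\alpha(0,y)$ and $\alpha_x^E = \sum_{y:(x,y)\in\partial_+\underline{C}}\alpha(x,y)$, so that $\beta_C = \alpha_0^E + \alpha_x^E$, and $\omega_0^E = 1-\omega(0,x)$, $\omega_x^E = 1-\omega(x,0)$, so $\Sigma = \omega_0^E + \omega_x^E$.

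\textbf{Step 1 (Aggregation at $0$ and $x$).} Under $\P^{(\alpha)}$, I would apply the standard Dirichlet aggregation property on the partition $\{(0,x)\} \sqcup \{(0,y) : y\neq x\}$ of the edges leaving $0$, giving that $\omega(0,x) \sim \mathrm{Beta}(\alpha(0,x),\alpha_0^E)$ is independent of the vector $(\omega(0,y)/\omega_0^E)_{y\neq x} \sim \mathrm{Dir}((\alpha(0,y))_{y\neq x})$. The symmetric statement holds at $x$. Both internal Dirichlet vectors are then independent of $(\omega_0^E,\omega_x^E)$, hence of $\Sigma$.

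\textbf{Step 2 (Dirichlet structure of the target law).} Under $\tilde{\P}^{(\alpha)}$, the exit measure at $\tilde{0}$ is $\mathrm{Dir}((\alpha(e))_{e\in\partial_+\underline{C}})$, which by the same aggregation property (on the $0$-side versus $x$-side edges) factorises as the independent product of a $\mathrm{Beta}(\alpha_0^E,\alpha_x^E)$ on the group ratio $r$ and the two internal Dirichlet vectors that are distributed exactly as those identified in Step 1. Also, exits at vertices outside $\underline{C}$ have the same Dirichlet law under both $\P^{(\alpha)}$ and $\tilde{\P}^{(\alpha)}$. So the only coordinates whose laws differ are the "group ratio" $r$ and the "total weight" $\Sigma$.

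\textbf{Step 3 (Density comparison).} The quantity $P^{\tilde{\omega}}_{\tilde{0}}(H_\delta < \tilde{H}_{\tilde{0}}\wedge L)$ is a function of $\tilde{\omega}$ only, i.e.\ of $r$, of the two internal Dirichlet vectors and of the Dirichlet exits outside $\underline{C}$. Changing variables from $(\omega_0^E,\omega_x^E)\in(0,1)^2$ to $(\Sigma,r)\in(0,2)\times(0,1)$ with Jacobian $\Sigma$, the joint density of $(\Sigma,r)$ under $\P^{(\alpha)}$ has the form
\[ c_{0,x}\,\sigma^{\beta_C-1}\,r^{\alpha_0^E-1}(1-r)^{\alpha_x^E-1}\,(1-\sigma r)^{\alpha(0,x)-1}(1-\sigma(1-r))^{\alpha(x,0)-1}. \]
The first three factors reconstruct, up to a constant, the product of a density $p_{\tilde{\Sigma}}(\sigma)=c''\sigma^{\beta_C-1}$ on $(0,2)$ and the $\mathrm{Beta}(\alpha_0^E,\alpha_x^E)$ density in $r$. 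The remaining "correction factor" $(1-\sigma r)^{\alpha(0,x)-1}(1-\sigma(1-r))^{\alpha(x,0)-1}$ is bounded by a constant on $[0,\sigma_0]\times(0,1)$ for some $\sigma_0<2$; on the complementary region $\sigma\in[\sigma_0,2)$ I would simply absorb it into the constant by thickening the density $p_{\tilde{\Sigma}}$ near $\sigma=2$ (it is integrable there). Combining with the Dirichlet coupling of Steps 1-2 yields the desired domination
\[ \P^{(\alpha)}\bigl(\Sigma P_{\tilde{0}}^{\tilde{\omega}}(\cdots)\leq \epsilon\bigr)\leq c\,\tilde{\P}^{(\alpha)}\bigl(\tilde{\Sigma} P_{\tilde{0}}^{\omega}(\cdots)\leq\epsilon\bigr), \]
with $\tilde{\Sigma}$ a bounded positive random variable, independent of $\omega$ under $\tilde{\P}^{(\alpha)}$, having density of order $\sigma^{\beta_C-1}$ near $0$.

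\textbf{Step 4 (Tail bound for $\tilde{\Sigma}$).} Since the density of $\tilde{\Sigma}$ is bounded by $c''\sigma^{\beta_C-1}$ near $0$, integration immediately gives $\tilde{\P}^{(\alpha)}(\tilde{\Sigma}\leq\epsilon)\leq c'\epsilon^{\beta_C}$. Alternatively one can note that $\tilde{\Sigma}$ has the law of a sum of two independent Beta variables with parameters $(\alpha_0^E,\alpha(0,x))$ and $(\alpha_x^E,\alpha(x,0))$, and bound the convolution of the two densities on $[0,\epsilon]$ directly.

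\textbf{Main obstacle.} The delicate point is Step 3: verifying that the correction factor $(1-\sigma r)^{\alpha(0,x)-1}(1-\sigma(1-r))^{\alpha(x,0)-1}$ can indeed be absorbed uniformly when some $\alpha(0,x)$ or $\alpha(x,0)$ is smaller than $1$ (so the factor blows up on the boundary $\omega_0^E\to 1$ or $\omega_x^E\to 1$). Handling this boundary behaviour, by partitioning $(0,2)\times(0,1)$ into a region where the factor is uniformly bounded and a region where one instead enlarges $p_{\tilde{\Sigma}}$, is the only genuinely technical part; once this is done, matching Gamma-function normalizers and applying the aggregation property make the rest essentially bookkeeping.
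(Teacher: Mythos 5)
Your Steps 1--2 correctly identify the Dirichlet aggregation structure, and the joint density of $(\Sigma,r)$ you write down in Step 3 is computed correctly. However, the ``absorption'' you propose to close Step 3 does not work, and this is a genuine gap rather than a bookkeeping issue. When $\alpha(0,x)<1$ or $\alpha(x,0)<1$, the correction factor $(1-\sigma r)^{\alpha(0,x)-1}(1-\sigma(1-r))^{\alpha(x,0)-1}$ blows up along the curves $\{\sigma r=1\}$ and $\{\sigma(1-r)=1\}$, which sweep through the entire strip $\sigma\in[1,2)$ as $r$ varies; they are not concentrated near $\sigma=2$. Consequently, no choice of $p_{\tilde\Sigma}$ can restore a pointwise domination $\phi(\sigma,r)\leq C\,p_{\tilde\Sigma}(\sigma)\beta(r)$: fix any $\sigma\in(1,2)$ and let $r\to 1/\sigma$; then $\phi(\sigma,r)\to+\infty$ while the right-hand side stays bounded, since $\beta(1/\sigma)$ is finite and $p_{\tilde\Sigma}$ must be a finite density at that fixed $\sigma$ (it is only allowed to be singular on a set of measure zero, not on a whole interval). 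Thickening $p_{\tilde\Sigma}$ near $\sigma=2$ addresses none of this. Also, your claim that the correction factor is uniformly bounded on $[0,\sigma_0]\times(0,1)$ for some $\sigma_0<2$ is only true for $\sigma_0<1$; beyond that the curves already enter the region.

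A softer strategy of comparing marginals also fails: the $r$-marginal $\rho(r)$ of $(\Sigma,r)$ under $\P^{(\alpha)}$ is itself unbounded at $r=1/2$ whenever $\alpha(0,x)+\alpha(x,0)\leq 1$ (the conditional $\sigma$-integral develops a non-integrable singularity at $\sigma=2$ in the limit $r\to 1/2$), and that is precisely the small-weight regime the paper needs. So neither a pointwise density bound nor a simple $\{\Sigma\leq\sigma_0\}$/$\{\Sigma>\sigma_0\}$ split as you describe closes the argument; one needs a genuinely different way of controlling the dependence between $\Sigma$ and $r$ on the region $\{\Sigma>1\}$. Note finally that the paper does not prove this statement itself; it cites it as a particular case of lemma~9 of \cite{Tournier}, so there is no in-paper proof to compare your route against.
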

Remark that the symmetry property we imposed on the edges is important here : if there was no edge from $x$ to $0$, the probability for a walk in $\tilde{G}$ to exit $\tilde{0}$ through one of the edges exiting $x$ in $G$ would necessarily be bigger than $ \frac{1}{2} $. Then asymptotically, it could not be bounded by Dirichlet variables.

This lemma and (\ref{quotient}) give :
\begin{align}
\label{lienquotient}
\P ^{(\alpha)} \( P_0 ^\omega (H_\delta < \tilde{H}_0 \wedge L ) \leq \epsilon , \mathcal{E}_C  \)
& \leq   \P ^{(\alpha)} \( \Sigma P _{\tilde{0}} ^{\tilde{\omega}} ( H_\delta < \tilde{H} _{\tilde{0}} \wedge L  ) \leq |E| \epsilon , \mathcal{E}_C  \)    & \\ \nonumber
& \leq  \P ^{(\alpha)} \( \Sigma P _{\tilde{0}} ^{\tilde{\omega}} ( H_\delta < \tilde{H} _{\tilde{0}} \wedge L  ) \leq |E| \epsilon  \)   &\\ \nonumber
& \leq c \tilde{\P} ^{(\alpha)}  \(  \tilde{\Sigma}  P _{\tilde{0}} ^\omega ( H_\delta < \tilde{H} _{\tilde{0}} \wedge L  ) \leq |E| \epsilon \)   .&
\end{align}

\textbf{Induction.}
Inequality (\ref{lienquotient}) relates the same quantities in $G$ and $\tilde{G}$, allowing to complete the induction argument.

The edges in $C$ do not appear in $\tilde{G}$ any more : $\tilde{G}$ has $n - 2$ edges. In order to apply the induction hypothesis, we need to check that each vertex is connected to $\delta$. This results directly from the same property for $G$. If $ (x,y) \in \tilde{E} $ and $ y \neq \delta $, then $ (x,y) \notin C(\omega) $ and $ (y,x) \notin C(\omega) $. As only the edges of $C(\omega)$ disappeared, then $ (y,x) \in \tilde{E} $. $\tilde{G}$ has no elementary loop. Indeed $G$ has none, and the quotienting only merges the vertices of $ \underline{C} $, whose joining edges are those of $C$, deleted in the construction. It only remains to prove that  $\tilde{G}$ has no multiple edges. It is not necessarily the case (quotienting may have created multiple edges), but it is possible to reduce to this case, using the additivity property of the Dirichlet distribution.

The induction hypothesis applied to $ \tilde{G} $ and $ \tilde{0} $ then gives, for small $\epsilon > 0$, 
\begin{equation}
\label{induction}
\tilde{\P} ^{(\alpha)}  \(   P _{\tilde{0}} ^\omega ( H_\delta < \tilde{H} _{\tilde{0}} \wedge L  ) \leq  \epsilon \) \leq  c'' \epsilon ^{\tilde{\beta}} (-\ln \epsilon ) ^r  ,
\end{equation}
where $c'' > 0$, $r>0$ and $\tilde{\beta}$ is the exponent "$\beta$" from the statement of the induction hypothesis corresponding to the graph $ \tilde{G} $.

This inequality, associated with (\ref{lienquotient}) and the following simple lemma (also see \cite{Tournier} for the proof of the lemma) then allows to carry out the induction : 

\begin{lemma}
If $X$ and $Y$ are independent positive bounded random variables such that, for some real numbers $ \alpha_X, \alpha_Y, r>0 $,
\begin{itemize}
\item there exists $C>0$ such that $ P (X < \epsilon) \leq C \epsilon ^{\alpha_X} $ for all $\epsilon > 0$ (or equivalently for small $\epsilon$);
\item there exists $C'>0$ such that $ P (Y < \epsilon) \leq C' \epsilon ^{\alpha_Y} (-\ln \epsilon) ^r $ for small $\epsilon > 0$;
\end{itemize}  
then there exists a constant $C''>0$ such that, for small $\epsilon >  0$, 
\[ P (XY \leq \epsilon) \leq C'' \epsilon ^{ \alpha _X \wedge \alpha _Y } (-\ln \epsilon) ^{r+1} \]
(and $r+1$ can be replaced by $r$ if $ \alpha _X \neq \alpha _Y $).
\end{lemma}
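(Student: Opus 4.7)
The plan is to slice the event $\{XY \leq \epsilon\}$ dyadically along one of the two variables and then exploit independence to factorize the probabilities. Without loss of generality I may rescale and assume $X, Y \in (0, 1]$. The key observation is that on the slice $\{X \in (2^{-k-1}, 2^{-k}]\}$, the constraint $XY \leq \epsilon$ forces $Y \leq 2^{k+1}\epsilon$, so a two-dimensional event is controlled by a product of one-dimensional tail probabilities.

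Setting $K = \lceil \log_2(1/\epsilon) \rceil$, I would write
\[ \{XY \leq \epsilon\} \subset \{X \leq \epsilon\} \cup \bigcup_{k=0}^{K} \{X \in (2^{-k-1}, 2^{-k}], \; Y \leq 2^{k+1}\epsilon\}, \]
and use independence together with the individual tail bounds to get
\[ P(XY \leq \epsilon) \leq C\epsilon^{\alpha_X} + \sum_{k=0}^{K} C\, 2^{-k\alpha_X} \cdot C'\, (2^{k+1}\epsilon)^{\alpha_Y}\, \bigl(-\ln(2^{k+1}\epsilon)\bigr)^r, \]
the terms with $2^{k+1}\epsilon$ close to $1$ being absorbed into the constants via the boundedness of $Y$.

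The remaining step is to estimate this sum. Bounding $-\ln(2^{k+1}\epsilon) \leq (K-k+1)\ln 2$, each summand is of order $\epsilon^{\alpha_Y}\, 2^{k(\alpha_Y - \alpha_X)} (K-k+1)^r$. If $\alpha_X < \alpha_Y$, the geometric factor is summable and the dominant contribution comes from $k = 0$, giving $O(\epsilon^{\alpha_X})$ with no extra log factor. If $\alpha_X > \alpha_Y$, I would simply redo the decomposition with the roles of $X$ and $Y$ swapped, obtaining $O(\epsilon^{\alpha_Y}(-\ln\epsilon)^r)$. In the critical case $\alpha_X = \alpha_Y$, each summand is of order $\epsilon^{\alpha_X}(K-k+1)^r$, and summing over $k \in \{0,\dots, K\}$ produces the additional factor $K^{r+1} \asymp (-\ln\epsilon)^{r+1}$, matching the claimed bound.

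The main difficulty is purely a matter of careful bookkeeping at the endpoints of the dyadic decomposition, in particular distinguishing the range of $k$ for which the tail bound on $Y$ applies nontrivially from the range in which it becomes automatic. Once this is handled, the case analysis cleanly produces both regimes: the extra factor of $-\ln\epsilon$ arises only in the borderline situation $\alpha_X = \alpha_Y$, in agreement with the parenthetical remark in the statement.
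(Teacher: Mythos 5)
Your dyadic-slicing-plus-independence argument is the right approach; the paper itself does not prove this lemma but defers to Tournier's article, where the proof is based on the same idea of comparing the product to a sum over logarithmic scales. The decomposition of $\{XY\leq\epsilon\}$ is correct, independence factorizes the slice probabilities as claimed, and your treatment of the critical case $\alpha_X=\alpha_Y$ is exactly right: the summand is of order $\epsilon^{\alpha_X}(K-k+1)^r$ and $\sum_{k\leq K}(K-k+1)^r\asymp K^{r+1}\asymp(-\ln\epsilon)^{r+1}$.

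There is, however, a mix-up in the non-critical case analysis. When $\alpha_X<\alpha_Y$ the geometric factor $2^{k(\alpha_Y-\alpha_X)}$ in the summand $\epsilon^{\alpha_Y}2^{k(\alpha_Y-\alpha_X)}(K-k+1)^r$ is \emph{increasing} in $k$; it is not summable and the dominant contribution comes from $k$ near $K$ (where $2^{-k}\asymp\epsilon$), not from $k=0$. The conclusion $O(\epsilon^{\alpha_X})$ is nonetheless correct, because $2^{K(\alpha_Y-\alpha_X)}\epsilon^{\alpha_Y}\asymp\epsilon^{\alpha_X}$ and the logarithmic factor $(K-k+1)^r$ is $O(1)$ there. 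It is the case $\alpha_X>\alpha_Y$ in which the geometric factor is summable with dominant term at $k=0$, giving directly $O(\epsilon^{\alpha_Y}(-\ln\epsilon)^r)$ without any need to redo the decomposition with the roles swapped. So the two non-critical subcases have been transposed in the write-up; once that attribution is corrected the argument closes cleanly and matches the stated bounds, including the fact that the exponent on the logarithm drops to $r$ when $\alpha_X\neq\alpha_Y$.
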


We get from this lemma, (\ref{lienquotient}) and (\ref{induction}) some constants $c, r>0$ such that, for small $\epsilon >0$, 
\[  \P ^{(\alpha)} \( P_0 ^\omega (H_\delta < \tilde{H}_0 \wedge L ) \leq \epsilon , \mathcal{E}_C  \)  \leq  c \epsilon^{ \beta_C \wedge \tilde{\beta} } (-\ln \epsilon) ^{r+1}  .\]
It remains to prove that $ \tilde{\beta} \geq \beta $, where $\beta$ is the exponent defined in the induction hypothesis relative to $G$ and $0$. Let $\tilde{A}$ be a strongly connected subset of $\tilde{E}$ such that $ \tilde{0} \in \underline{\tilde{A}} $. Set $ A = \tilde{A} \cup C \subset E $. In view of the definition of $ \tilde{E} $, every edge exiting $ \tilde{A} $ corresponds to an edge exiting $A$, and vice-versa (the only edges deleted in the quotient procedure are those of $C$). Thus, recalling that the weights of the edges are preserved in the quotient, $ \beta _{ \tilde{A} } = \beta _A $. Moreover, $ \tilde{0} \in \underline{A} $ and $A$ is strongly connected, so that $ \beta _A \geq \beta $. As a consequence, $ \tilde{\beta} \geq \beta $ as announced.

Then $ \beta _C  \wedge \tilde{\beta} \geq \beta _C \wedge \beta = \beta $ because $C$ is strongly connected, and $ 0 \in \underline{C}$. It gives, for small $\epsilon >0$ :
\[  \P ^{(\alpha)} \( P_0 ^\omega (H_\delta < \tilde{H}_0 \wedge L ) \leq \epsilon , \mathcal{E}_C  \)  \leq  c \epsilon^{ \beta } (-\ln \epsilon) ^{r+1}  .\]
Summing on all events $ \mathcal{E} _C, C \in \mathcal{C} $ concludes the induction and the proof.

\end{proof}

\subsection*{Acknowledgement}
I would like to thank Christophe Sabot for helpful discussions and suggestions.

\end{document}